\newtheorem{theorem}{Theorem}
\newtheorem{lemma}{Lemma}
\newtheorem{proposition}{Proposition}
\newtheorem{example}{Example}
\bmdefine{\Bt}{t}
\bmdefine{\BX}{X}
\bmdefine{\BY}{Y}
\bmdefine{\BZ}{Z}
\bmdefine{\BB}{B}
\bmdefine{\BM}{M}
\bmdefine{\BD}{D}
\bmdefine{\Bi}{i}
\bmdefine{\Bj}{j}
\bmdefine{\Bk}{k}
\bmdefine{\Bx}{x}
\bmdefine{\By}{y}
\bmdefine{\Bz}{z}
\bmdefine{\Bv}{v}
\bmdefine{\Bw}{w}
\bmdefine{\Bn}{n}
\bmdefine{\Ba}{a}
\bmdefine{\Bb}{b}
\bmdefine{\Bc}{c}
\bmdefine{\Be}{e}
\bmdefine{\Bu}{u}
\bmdefine{\Bp}{p}
\bmdefine{\Bzero}{0}
\bmdefine{\Bone}{1}
\newcommand{\Z}{{\mathbb Z}}
\newcommand{\cF}{{\cal F}}
\newcommand{\cI}{{\cal I}}
\newcommand{\cJ}{{\cal J}}
\newcommand{\cS}{{\cal S}}
\newcommand{\ncolon}{\!:\!}
\newcommand{\paths}{W}
\newcommand{\drb}[2]{\draw[very thick] (V#1)--(V#2);}
\newcommand{\drw}[2]{\draw[very thick, dotted] (V#1)--(V#2);}
\newcommand{\drbl}[2]{\draw [thin](V#1)--(V#2);}
\newcommand{\drwl}[2]{\draw[thin,dotted] (V#1)--(V#2);}
\newcommand{\mytkzini}{
\foreach \i in {1,...,3} {   \foreach \j in {1,...,4}    {
  \path (\j,-\i) coordinate (V\i\j);  \fill (V\i\j) circle (2pt); }}
}
\title{Markov chain Monte Carlo test of toric homogeneous Markov chains}
\author{
Akimichi Takemura\footnote{
Graduate School of Information Science and Technology, 
University of Tokyo}
\footnote{JST, CREST} \ and 
Hisayuki Hara\footnote{
Department of Technology Management for Innovation, 
University of Tokyo}
}
\date{August  2010}
\begin{document}
\maketitle

\begin{abstract}
  Markov chain models are used in various fields, such behavioral sciences 
  or econometrics. 
  Although the goodness of fit of the model is usually assessed by
  large sample approximation, it is desirable to use conditional tests
  if the sample size is not large.
  We study  Markov bases for performing conditional tests of the 
  toric homogeneous Markov chain model, which is the envelope exponential family for the
  usual homogeneous Markov chain model. 
  We give a complete description of a Markov basis for
  the following cases: i) two-state, arbitrary length, ii) arbitrary
  finite state space and length of three.  The general case
  remains to be a conjecture.  We also present a numerical example of 
  conditional tests based on our Markov basis.

\end{abstract}

\section{Introduction}
Consider a Markov chain $X_t$, $t=1,\dots,T\,(\ge 3)$,
over a finite state space $\cS=\{1,\dots,S\}$ ($S\ge 2$).
Let $\pi_i$, $i\in \cS$,  denote the initial distribution of $X_1$ 
and let $p^{(t)}_{ij}=P(X_{t+1}=j \mid X_{t}=i)$ %
denote the
transition probability from time $t$ to $t+1$.
We want to test the null hypothesis
of homogeneity
\begin{equation}
\label{eq:null-hypothesis}
H_0:\  p^{(t)}_{ij} = p_{ij}, \qquad t=1,\dots,T-1.
\end{equation}
This model is used as a standard model in many fields. 
See \citet{Haccou-Meelis} for
behavioral sciences and \citet{Sylvia-Fruhwirth-Schnatter} for econometrics.
Usually the goodness of fit of the model is assessed by
large sample approximation (\citet{anderson-goodman}, \citet{billingsley-1961}).  
However when
the sample size is not large, it is desirable to use conditional tests.
The Markov basis methodology (\citet{diaconis-sturmfels}) is attractive
for performing them.

In this paper we study Markov bases for performing conditional tests
of the null hypothesis of homogeneity.
However the homogeneous Markov chain model (\ref{eq:null-hypothesis})
is a curved exponential family due to 
the constraints $\sum_{j\in \cS} p_{ij}=1$,
$\forall i\in \cS$, and we cannot directly apply  conditional tests 
to $H_0$ in (\ref{eq:null-hypothesis}).  
Instead, by the Markov basis methodology we
can test a larger null hypothesis $\bar H_0$ 
that  $X_t, t=1,\dots,T$, are observations
from a {\em toric} homogeneous Markov chain
(cf.\ Section 1.4 of \citet{ascb}), which is the envelope exponential
family 
(cf.\ \citet{kuchler-sorensen-1996aism} and \citet[Chapter 7]{kuchler-sorensen-book})
of the homogeneous Markov chain model (\ref{eq:null-hypothesis}).  
In Section \ref{subsec:THMC-interpretation} 
we discuss interpretations of the toric homogeneous Markov chain model and
its difference from the usual homogeneous Markov chain model.

In the following for notational simplicity we write
THMC for ``toric homogeneous Markov chain''.
Note that in Section 1.4 of \citet{ascb} they consider
a model without parameters for the initial distribution, whereas 
our THMC model 
contains parameters for the initial distribution.
As an alternative hypothesis to THMC model 
we can take non-homogeneous Markov chain model, which is an
(affine or full) exponential family model which includes THMC model as
a submodel.

In this paper we  derive complete description of Markov bases for THMC model
for the case of $S=2$ (arbitrary $T$) and the
case of $T=3$ (arbitrary $S$). 
For other combinations of $(S,T)$ with small $S$ and $T$, we can
use a computer algebra package 4ti2 (\citep{4ti2}).   
Complete description of Markov bases for general $S$ and $T$ seems
to be a difficult problem at present. 

The organization of this paper is
as follows.  In Section
\ref{sec:preliminaries} we first set up our model 
and then give some preliminary results.
We derive a Markov basis for the case of $S=2$ in Section \ref{sec:two-state} 
and for $T=3$ in Section \ref{sec:three-period}.
In Section \ref{sec:numerical} we present an application of our Markov basis to a
data set. In Section \ref{sec:discussion} we give some discussions on Markov bases for general $S$ and $T$.

\section{Toric homogeneous Markov chain model}
\label{sec:preliminaries}

\subsection{THMC model and its interpretation}
\label{subsec:THMC-interpretation}

Let 
$\omega=(s_1, \dots, s_T)\in \cS^T$ denote an observed path of a Markov
chain. 
Each path is considered as one frequency of an 
$\vert \cS \vert^T$ contingency table.
The usual homogeneous Markov chain model (\ref{eq:null-hypothesis})
specifies the probability of the
path $\omega$ as
\begin{equation}
\label{eq:HMC-prob}
H_0 : \  p(\omega)=\pi_{s_1} p_{s_1 s_2} \dots p_{s_{T-1} s_T},
\end{equation}
where the normalization is 
$\sum_{i\in \cS}\pi_i=1$ and  $\sum_{j\in \cS} p_{ij}=1$, $\forall i\in \cS$.
In THMC model, the normalization is only assumed for the total probability.
In order to avoid confusion, let $\gamma_i$, $i\in \cS$, and $\beta_{ij}$, $i,j\in \cS$, be free nonnegative parameters.  We specify THMC model by
\begin{equation}
\label{eq:THMC-prob}
\bar H_0 :  \ p(\omega)=c \gamma_{s_1} \beta_{s_1 s_2} \dots \beta_{s_{T-1} s_T},
\end{equation}
where $c$ is the overall normalizing constant.
THMC model (\ref{eq:THMC-prob}) is the envelope exponential family 
of the homogeneous Markov chain model  (\ref{eq:HMC-prob}), i.e., the
linear hull  
of logarithms of the probabilities in  
(\ref{eq:HMC-prob}) coincides with the set of logarithms
of probabilities in (\ref{eq:THMC-prob}).
This simply follows from the fact that the linear hull %
of elementwise logarithms of probability vector
\[
\{ (\log p_1, \dots, \log p_S) \mid p_i > 0, i\in \cS, \ \sum_{i\in \cS}p_i=1 \}
\]
is the whole ${\mathbb R}^S$.
The THMC model $\bar H_0$ contains  the usual homogeneous Markov chain model $H_0$
and the difference in degrees of freedom  is $S-1$.

As discussed in 
\citet{kuchler-sorensen-1996aism} 
an envelope exponential family is often difficult to interpret.  For THMC model we give
the following interpretation.   Write
\[
\beta_{ij}=\alpha_i p_{ij},  \qquad \sum_{j\in \cS} p_{ij}=1 .
\]
Then 
(\ref{eq:THMC-prob}) is written as
\[
p(\omega) = c \gamma_{s_1} \alpha_{s_1} p_{s_1 s_2} \dots \alpha_{s_{T-1}}
   p_{s_{T-1,T}} = c \gamma_{s_1}  p_{s_1 s_2} \dots  p_{s_{T-1,T}} \times ( \alpha_{s_1} \dots \alpha_{s_{T-1}}). 
\]
Compared to (\ref{eq:HMC-prob}), $\alpha_i$ specifies a  magnifying ratio
of probability of a path visiting the state $i$.  We can interpret $\alpha_i$ as
an internal growth rate or ``birth rate'' of state $i$, which is not affected by
immigration or emigration.  

As an alternative hypothesis we can take the  non-homogeneous Markov chain model
\begin{equation}
\label{eq:non-homogeneous}
H_1 :\ p(\omega)=\pi_{s_1} p^{(1)}_{s_1 s_2} \dots p^{(T-1)}_{s_{T-1} s_T}.
\end{equation}
By reparametrization this model can be simply written as 
the linearly ordered conditional independence model:
\[
p(\omega)= c\beta^{(1)}_{s_1 s_2} \beta^{(2)}_{s_2 s_3} \dots \beta^{(T-1)}_{s_{T-1} s_T}. 
\]
This is a decomposable
model of a $T$-way contingency table, with its independence graph
given by the following linear tree:
\begin{center}
\begin{tikzpicture}
\draw (0,0) node [above] {$1$} node {$\bullet$} 
-- (1,0) node [above] {$2$} node {$\bullet$}
-- (2.5,0) node [above]{$\dots$}
-- (4,0) node [above]{$T-1$}node {$\bullet$} 
-- (5,0) node [above]{$T$}node {$\bullet$} ;
\end{tikzpicture}
\end{center}
This model is a full exponential family model containing THMC model.

In this paper we consider testing $\bar H_0$ against $H_1$ via Markov basis approach.  
Although by this procedure we are not directly testing $H_0$, it should be noted 
that we can reject $H_0$ if $\bar H_0$ is rejected.  
When $\bar H_0$ is accepted, we further need
to test $H_0$ against $\bar H_0$.  However at present there is no general
procedure for finite sample exact tests  of curved exponential family.

\subsection{Sufficient statistic and moves for THMC model}
\label{subsec:definitions}

Suppose that we observe  $N$ paths $\omega_1,\dots, \omega_N$ from 
a homogeneous Markov chain (\ref{eq:null-hypothesis}).
We write $\paths=\{\omega_1,\dots, \omega_N\}$.  Here multiple
paths are allowed in $\paths$ and hence $\paths$ is a multiset.
Let $x(\omega)$ denote the frequency of the path $\omega$ in $\paths$.
Then the data set is summarized in a $T$-way contingency
table $\Bx=\{x(\omega), \omega\in \cS^T\}$.

Let $x^{t}_{ij}$ denote the number of transitions from $s_t=i$ to $s_{t+1}=j$ in $\paths$ and
let $x^{t}_i$ denote the frequency of the state $s_t=i$ in $\paths$.
In particular $x^1_i$ is the frequency of the 
initial state $s_1=i$.
Let
\[
x^+_{ij}=\sum_{t=1}^{T-1} x^t_{ij}
\]
denote the total number of transitions from $i$ to $j$ in $\paths$.
Under %
$H_0$ the joint probability of $\Bx$ is written as
\begin{align*}
p(\Bx) &= \frac{N!}{\prod_{\omega\in \cS^T} x(\omega)!} \prod_{\omega\in \cS^T}
(\pi_{s_1} p_{s_1 s_2} \dots p_{s_{T-1} s_T})^{x(\omega)}
\\
&= \frac{N!}{\prod_{\omega\in \cS^T} x(\omega)!} \prod_{s\in \cS} \pi_s^{x^1_s} \prod_{i,j\in \cS}
p_{ij}^{x^+_{ij}}.
\end{align*}
Therefore the sufficient statistic for $H_0$ 
is given by the initial frequencies and the frequencies  of transitions
\begin{equation}
\label{eq:sufficient-statistic}
\Bb=\Bb(\Bx)=\{x^1_i , i \in \cS\} \  \cup \ \{x^+_{ij}, i,j\in \cS\}.
\end{equation}

In fact this sufficient statistic for $H_0$ is minimal sufficient and 
it is also the sufficient statistic for THMC model.  This simply reflects
the fact that THMC model is the envelope exponential family of the
homogeneous Markov chain model. 
Since THMC model (\ref{eq:THMC-prob}) is an exponential family with
integral sufficient statistic, we can perform the conditional test of 
$\bar H_0$ %
by the Markov basis methodology.  
Therefore our goal in this paper is to obtain Markov bases for $\bar
H_0$ with various $S$ and $T$. 

If we order paths of $\cS^T$ appropriately and write $\Bx$ as a column
vector according to the order, $\Bb=\Bb(\Bx)$ in
(\ref{eq:sufficient-statistic}) is written in a matrix form 
\[
\Bb = A \Bx , 
\]
where $A$ is an $S(S+1)\times S^T$ matrix consisting of non-negative
integers. 
The set of  all contingency tables 
sharing $\Bb$ is called a {\it fiber} and denoted by 
$\cF_{\Bb}=\{\Bx \in \Z^{S^T}_{\ge 0} \mid A\Bx = \Bb\}$, 
where $\Z_{\ge 0}=\{0,1,\dots\}$.
A move $\Bz$ for THMC model is an integer array satisfying 
$A \Bz =0$. 
Then $\Bz$ is expressed by a difference of two contingency tables $\Bx$
and $\By$ in the same fiber: 
\[
\Bz = \Bx - \By, \qquad  z(\omega)=x(\omega)-y(\omega), \ \omega\in \cS^T.
\]
We write $z^t_i= x^t_i - y^t_i$  and 
$z^t_{ij}=x^t_{ij}-y^t_{ij}$.  
Note that $\sum_{i\in \cS}z^t_i=0$ for all $t$, because
$N=\sum_{i\in \cS}x^t_i=\sum_{i\in \cS}y^t_i$ is the
number of paths. Also since $\Bx$ and $\By$ is in the same fiber
$\sum_{t=1}^{T-1}z^t_{ij}=0$ for all $i,j\in \cS$. 

\begin{example}
For illustration we consider the case of $S=2$ and $T=4$. 
The configuration $A$ is written as
\begin{equation}
\label{eq:s2t4A}
A=
\setlength{\tabcolsep}{2pt}
\begin{tabular}{lcccccccccccccccc}
&\scriptsize{1111} & \scriptsize{1112} & \scriptsize{1121}& \scriptsize{1122} & \scriptsize{1211}& \scriptsize{1212} & \scriptsize{1221} & \scriptsize{1222} & 
\scriptsize{2111} & \scriptsize{2112} & \scriptsize{2121} & \scriptsize{2122} & \scriptsize{2211} & \scriptsize{2212} & \scriptsize{2221} &  \scriptsize{2222}\\
& 1 & 1 & 1 & 1 & 1 & 1 & 1 & 1  & 0 & 0 & 0 & 0 & 0 & 0 & 0 & 0 \\
& 0 & 0 & 0 & 0 & 0 & 0 & 0 & 0  & 1 & 1 & 1 & 1 & 1 & 1 & 1 & 1 \\
``11''\ & 3 & 2 & 1 & 1 & 1 & 0 & 0 & 0  & 2 & 1 & 0 & 0 & 1 & 0 & 0 & 0 \\
``12''\ & 0 & 1 & 1 & 1 & 1 & 2 & 1 & 1  & 0 & 1 & 1 & 1 & 0 & 1 & 0 & 0 \\
``21''\ & 0 & 0 & 1 & 0 & 1 & 1 & 1 & 0  & 1 & 1 & 2 & 1 & 1 & 1 & 1 & 0 \\
``22''\ & 0 & 0 & 0 & 1 & 0 & 0 & 1 & 2  & 0 & 0 & 0 & 1 & 1 & 1 & 2 & 3 \\
\end{tabular}
\end{equation}
The first two rows of $A$ correspond to $x^1_1$ and $x^1_2$.  
The other rows correspond to $x^+_{11}$, $x^+_{12}$, $x^+_{21}$, 
$x^+_{22}$ respectively. 
The columns of $A$ are indexed by the paths $\omega\in \{1,2\}^4$ in
the lexicographic order. 
Note that compared to a similar configuration in Section 1.4.1 of
\cite{ascb}, $A$ in (\ref{eq:s2t4A}) has additional two rows
corresponding to initial frequencies.

An interesting feature of the above $A$ is that it contains identical
columns.   
The columns 1121 and 1211 are identical.
This means that the difference of the two paths
$\Bz =\{z(\omega) \mid \omega \in \cS^T\}$
\[
 z(\omega)= \left\{
 \begin{array}{rl}
  1, & \text{if } \omega = 1121,\\
  -1,& \text{if } \omega = 1211,\\
   0, & \text{otherwise}
 \end{array}
\right. 
\]
forms a degree one move.
We depict the degree one move as
\begin{equation}
\label{movegraph}
\begin{tikzpicture}[baseline=-1.2cm,scale=0.8]
 \foreach \i in {1,2} {   \foreach \j in {1,...,4}    {
 \path (\j,-\i) coordinate (V\i\j);  \fill (V\i\j) circle (1pt); }}
 \draw (V11)--(V12)--(V23)--(V14);
 \draw [dotted] (V11)--(V22)--(V13)--(V14);
 \draw (0,-1) node {1};
 \draw (0,-2) node {2};
 \draw (0,-0.4) node {$s\backslash t$};
 \foreach \j in {1,...,4} { \draw (V1\j)  node [above]  {\j};}
\end{tikzpicture}\;, 
\end{equation}
where a solid line from $(i,t)$ to $(j,t+1)$ represents  
$z^t_{ij}=1$ and a dotted line from $(i,t)$ to $(j,t+1)$ represents  
$z^t_{ij}=-1$. 
We note that the columns 2122 and 2212 are also identical.
We could remove redundant columns from $A$ and only leave a
representative column from the set of identical columns.  However it
seems better to leave identical columns in view of the symmetry in $A$
and we will work with the above form of $A$. 
\end{example}

In the following we often use the graph representation of moves as
(\ref{movegraph}).
We call such a graph a move graph.
A node of a move graph is a pair $(i,t)$ of state $i$ and time $t$ and
an edge from $(i,t)$ to $ (j,t+1)$ represents the value of $z^t_{ij}$.  
If $\vert z^t_{ij} \vert =0$, there is no corresponding edge in the
graph. 
If $\vert z^t_{ij} \vert \ge 2$, we write the value of 
$\vert z^t_{ij} \vert$ beside the edge. 
For example, the move for $S=2$ and $T=4$ such that 
$z^1_{11}=z^3_{22}=-1$, $z^2_{12}=-2$ and 
$z^1_{12}=z^2_{22}=z^2_{11}=z^3_{12}=1$ is represented by
\[
\begin{tikzpicture}[baseline=-1.2cm,scale=0.8]
\foreach \i in {1,2} {   \foreach \j in {1,...,4}    {
 \path (\j,-\i) coordinate (V\i\j); \fill (V\i\j) circle (1pt);}}
 \drwl{11}{12}; \drwl{12}{23}; \drwl{23}{24};
 \drbl{11}{22}; \drbl{22}{23}; \drbl{12}{13}; \drbl{13}{24};
 \draw (2.8,-1.5) node {\scriptsize 2};
\end{tikzpicture}\quad . 
\]
We note that a move graph does not have a one-to-one correspondence to a
move and more than one move have the same move graph in general.

For notational simplicity we denote the edge 
from $(i,t)$ to $(j,t)$ as $t\ncolon ij$.
Given %
a contingency table $\Bx$,
$x^t_i$ is the number of paths passing through the node $(i,t)$ and
$x^t_{ij}$ is the number of paths passing through the edge 
$t\ncolon ij$. 
Consider a partial path $(s_t, s_{t+1}, \dots, s_{t'})$ 
starting at time $t$ and ending at time $t'>t$.
We write this partial path as $t\ncolon s_t s_{t+1} \dots s_{t'}$. 
$x^t_{s_t s_{t+1} \dots s_{t'}}$ 
denotes the frequency of the partial path $t\ncolon s_t s_{t+1} \dots s_{t'}$
in $\paths$.  For a particular path $\omega\in \cS^T$ we say that
$\omega$ passes through $t\ncolon s_t s_{t+1} \dots s_{t'}$
if $\omega$ is of the form  $(*,\dots,*,s_t,s_{t+1},\dots,s_{t'},*,\dots,*)$ where
$*$ is arbitrary.

Let $\Bz = \Bx - \By$. 
If $z^t_i>0$ we say that $\Bx$ dominates $\By$ 
at the node $(i,t)$. Similarly  if $z^t_{ij}>0$ we say that 
$\Bx$ dominates $\By$  in the edge $t\ncolon ij$.
Consider a partial path $t\ncolon s_t \dots s_{t'}$.
If  all of 
$z^t_{s_t,s_{t+1}}$, $z^{t+1}_{s_{t+1},s_{t+2}}$, \dots, 
$z^{t'-1}_{s_{t'-1},s_{t'}}$ are positive, we say that
$\Bx$ dominates $\By$ in the partial path $t\ncolon s_t \dots s_{t'}$.

Given the signs of some $z^t_{ij}$'s, 
we depict the pattern of signs as an undirected graph with thick edges.
Positive edges are depicted by thick solid lines and negative edges by
thick dotted line.  
In the case of $S=2$ and $T=4$, when $z^1_{11}, z^2_{12}$ and $z^3_{22}$
are positive and $z^1_{12}, z^2_{11}, z^2_{22}$ and $z^3_{12}$ are
negative, the corresponding graph is depicted by 
\begin{equation}
\begin{tikzpicture}%
\path (0,-1.5) coordinate (midy);
\draw (0,-0.4) node {$s\backslash t$};
\foreach \i in {1,2} {   \foreach \j in {1,...,4}    {
  \path (\j,-\i) coordinate (V\i\j);  \fill (V\i\j) circle (2pt); }}
\foreach \j in {1,...,4} {
  \draw (\j,-0.4) node {$\j$};}
\draw (0,-1) node {1};
\draw (0,-2) node {2};
\drb{11}{12}; \drb{12}{23}; \drb{23}{24};
\drw{11}{22}; \drw{22}{23}; \drw{12}{13}; \drw{13}{24};
\end{tikzpicture}
\label{eq:ppflip}
\end{equation}
regardless of the signs of $z^1_{22}$, $z^1_{21}$, $z^2_{21}$,
$z^3_{21}$ and $z^3_{11}$ where no edge is depicted. 
We call such graphs edge-sign pattern graphs. 
We note that edges of an edge-sign pattern graph have only the
information on the signs of $z^t_{ij}$ and do not have the
information on the value of $z^t_{ij}$ like a move graph.

For considering our Markov basis it is important to
note that THMC %
model possesses the symmetry with respect
to time reversal.  The following fact is well known, as
discussed in Section 2 of \citet{billingsley-1961}.

\begin{lemma}
\label{lemma:terminal}
For all $\Bx$ in the same fiber $\cF_{\Bb}$ of THMC model, the terminal
 frequencies $x^T_j$, $j\in \cS$, are common.
\end{lemma}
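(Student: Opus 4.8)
The plan is to prove something slightly stronger than stated, namely that each terminal frequency $x^T_j$ can be written as an explicit affine function of the sufficient statistic $\Bb$ in (\ref{eq:sufficient-statistic}). Once $x^T_j$ is exhibited as a function of $\Bb$ alone, the lemma is immediate: every $\Bx\in\cF_{\Bb}$ shares the same $\Bb=A\Bx$, so it shares the same value of $x^T_j$. To set this up, I would first record two elementary flow-balance identities valid for any contingency table $\Bx$. A path occupying state $j$ at time $t+1$ must have entered $(j,t+1)$ along some edge $t\ncolon ij$, and a path occupying state $j$ at time $t$ must leave $(j,t)$ along some edge $t\ncolon jk$; counting paths in each of these two ways gives the inflow and outflow relations
\[
x^{t+1}_j=\sum_{i\in\cS}x^t_{ij},\qquad x^t_j=\sum_{k\in\cS}x^t_{jk},
\]
both valid for every $j\in\cS$ and every $t=1,\dots,T-1$.

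Next I would sum each identity over $t=1,\dots,T-1$ and subtract. Summing the inflow relation and reindexing gives $\sum_{t=2}^{T}x^t_j=\sum_{i\in\cS}x^+_{ij}$, while summing the outflow relation gives $\sum_{t=1}^{T-1}x^t_j=\sum_{k\in\cS}x^+_{jk}$, where I have used $x^+_{ij}=\sum_{t=1}^{T-1}x^t_{ij}$. The two left-hand sides agree except at their endpoints, so on subtraction the interior terms $x^2_j,\dots,x^{T-1}_j$ cancel telescopically and leave
\[
x^T_j-x^1_j=\sum_{i\in\cS}x^+_{ij}-\sum_{k\in\cS}x^+_{jk}.
\]
Hence $x^T_j=x^1_j+\sum_{i\in\cS}x^+_{ij}-\sum_{k\in\cS}x^+_{jk}$, and the right-hand side is assembled entirely from the initial frequency $x^1_j$ and the transition totals $x^+_{ij}$, all of which are coordinates of $\Bb$. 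This completes the argument.

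I do not expect a genuine obstacle; the only point requiring care is the bookkeeping of the reindexing and the telescoping cancellation, i.e.\ checking that the net in-flow minus out-flow at state $j$ over all times collapses to exactly $x^T_j-x^1_j$ and involves no other time-$t$ occupancies. Equivalently, and perhaps more transparently, I could phrase the whole computation at the level of a move $\Bz=\Bx-\By$ with $A\Bz=0$: the same derivation yields $z^T_j=z^1_j+\sum_{i\in\cS}z^+_{ij}-\sum_{k\in\cS}z^+_{jk}$, and since $A\Bz=0$ forces $z^1_j=0$ and $z^+_{ij}=0$ for all $i,j\in\cS$, we conclude $z^T_j=0$, which is precisely the assertion that terminal frequencies are common across the fiber. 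This also makes visible the time-reversal symmetry of THMC model noted before the statement: exchanging the roles of $t$ and $T+1-t$ interchanges inflow and outflow and turns the present identity into the dual statement that the initial frequencies are determined by the reversed sufficient statistic.
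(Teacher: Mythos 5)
Your proof is correct. Note that the paper itself does not actually prove this lemma: it states it as a well-known fact and refers the reader to Section 2 of Billingsley (1961). The argument you give is the standard one that this citation points to, namely the flow-balance identities $x^{t+1}_j=\sum_{i\in\cS}x^t_{ij}$ and $x^t_j=\sum_{k\in\cS}x^t_{jk}$, summed over $t$ and subtracted so that the interior occupancies telescope away, leaving $x^T_j=x^1_j+\sum_{i\in\cS}x^+_{ij}-\sum_{k\in\cS}x^+_{jk}$ as an explicit affine function of $\Bb$. This is exactly the right level of generality (it holds for any nonnegative integer table viewed as a multiset of paths, hence for every element of $\cF_{\Bb}$), and your reformulation at the level of a move $\Bz$ with $A\Bz=0$ matches how the paper immediately uses the lemma, namely to conclude $z^T_i=0$ and hence equation (\ref{eq:zsum2T1}). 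No gaps.
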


In terms of moves,
the above lemma implies that  %
$z^T_i=0$  
for all $i\in \cS$.
Since $z^1_i=0$, we have 
\begin{equation}
\label{eq:zsum2T1}
\sum_{t=2}^{T-1} z_i^t = 0.
\end{equation}

Lemma \ref{lemma:terminal} shows that given $\Bb$,
the paths of the original homogeneous Markov chain and %
the paths of the time-reversed homogeneous Markov chain has the
same conditional distribution.  This symmetry with respect to time
reversal is important in considering a Markov basis for the present problem.

\subsection{Linearly ordered conditional independence model and crossing
path swapping}

Let $\Bx$ and $\By$ in the same fiber be given.  We call $\Bx$ and
$\By$ {\em edge-wise equivalent} if $z^t_{ij}=0$ for all $t$
and for all $i,j$.  Edge-wise equivalence does not mean that
$\Bx$ and $\By$ are identical as a multiset of $N$ paths.
However consider a non-homogeneous Markov chain
$H_1$ in (\ref{eq:non-homogeneous}), which is a linearly ordered conditional
independence model of $T$-way contingency tables.
Then $x^t_{ij}$, $1\le t\le T-1$ and $i,j\in \cS$, constitute
the sufficient statistic for the model.
Therefore moves for $H_1$ is expressed by the difference of two
edge-wise equivalent paths and their move graphs have no edges.

We note that $H_1$ is a decomposable model for contingency tables.
By \citet{dobra-2003bernoulli} there exists a Markov basis for
this %
model consisting of square-free degree two moves.   
These square-free degree two moves are related  to the idea of %
swapping of two paths meeting (or crossing) at a node.  
Let
$\bar \omega=(s_1, \dots, s_T)$ and
$\bar \omega'=(s_1', \dots, s_T')$ be two paths.  
We say that these two paths meet (or cross) at the node $(i,t)$ if
$i=s_t=s_t'$. If  $\omega$ and $\omega'$ cross 
at the node $(i,t)$, %
consider the swapping of these two paths like 
\begin{align}
&\{\omega,\omega'\}=\{\; (s_1, \dots, s_{t-1},i,s_{t+1},\dots,s_T), \ 
(s_1', \dots, s_{t-1}',i,s_{t+1}',\dots,s_T')\; \} \nonumber\\
& \qquad  \leftrightarrow
\{\; (s_1, \dots, s_{t-1},i,s_{t+1}',\dots,s_T'),
(s_1', \dots, s_{t-1}',i,s_{t+1},\dots,s_T)\; \} :=\{\tilde\omega, \tilde\omega'\}
\label{eq:crossing}
\end{align}
\begin{center}
\begin{tikzpicture}
\draw (4,2) node [above] {$t$};
\draw [thin] (1,0) -- (3,0) 
-- (4,1) node {$\bullet$} node [left]{$i$} 
-- (5,2) -- (7,2) ;
\draw [thin] (1,2) -- (2,1) -- (3,2) 
-- (4,1) node {$\bullet$} node [left]{$i$} 
-- (5,0) -- (7,0) ;
\draw [<->]  (6,0.2) -- (6,1.8);
\end{tikzpicture}
\end{center}
Then the difference $\bm{z}$ of $(\omega,\omega')$ and 
$(\tilde\omega,\tilde\omega')$ 
\[
 z(\omega) = \left\{
 \begin{array}{rl}
  1 & \text{if } \omega = \bar \omega\; \text{or}\; \bar \omega'\\
  -1& \text{if } \omega = \tilde \omega \; \text{or}\; \tilde \omega'\\
  0 & \text{otherwise}
 \end{array}
\right.
\]
forms a move for $H_1$.
We call this move %
a ``crossing path swapping''.
By  \citet{dobra-2003bernoulli} we have the following proposition.

\begin{proposition}
\label{prop:swapping}
The set of crossing path swappings in (\ref{eq:crossing})
constitutes a Markov basis for the linearly ordered conditional independence model.
\end{proposition}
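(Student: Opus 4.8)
The plan is to establish two facts: first, that every crossing path swapping is genuinely a move for $H_1$, i.e.\ that it preserves all the consecutive two-way margins $x^t_{ij}$; and second, that these moves connect every fiber, so that they generate the lattice of moves and hence form a Markov basis. The first fact is a direct check on the swap (\ref{eq:crossing}) at a node $(i,t)$: the two paths share the edge into $(i,t)$ and exchange only their suffixes after time $t$, so for each $\tau\neq t$ the multiset of edges $\{(s_\tau,s_{\tau+1}),(s'_\tau,s'_{\tau+1})\}$ is unchanged, while at $\tau=t$ the pair $\{(i,s_{t+1}),(i,s'_{t+1})\}$ is merely reordered; hence $z^t_{ij}=0$ for all $t,i,j$ and the swap lies in the fiber. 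Since $H_1$ is the decomposable model of the path $1\!-\!2\!-\!\cdots\!-\!T$, whose cliques are the consecutive pairs $\{t,t+1\}$, the connectivity claim is exactly \citet{dobra-2003bernoulli} specialized to this graph; I would give a self-contained induction on $T$ that realizes Dobra's clique-by-clique reduction by crossing path swapping.

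For $T=2$ the only margin is the full table itself, so each fiber is a singleton and there is nothing to prove. For the inductive step, fix $\Bx,\By$ in a common fiber and peel off the last clique $\{T-1,T\}$ with separator $\{T-1\}$. Let $\Bx',\By'$ be the marginal tables on the first $T-1$ coordinates; these are tables for the length-$(T-1)$ model and lie in a common fiber of it (their margins $x^t_{ij}$, $t\le T-2$, agree). Phase 1 is to reach from $\Bx$ a table $\Bx^*$ whose prefix marginal equals $\By'$. By the inductive hypothesis there is a sequence of crossing swappings carrying $\Bx'$ to $\By'$; I lift each prefix swap at $(i,t)$, $t\le T-2$, to the full table by choosing for each of the two crossing prefix-paths an arbitrary full path extending it (these exist because the current prefix counts, maintained inductively along the sequence, are positive) and swapping their entire suffixes after time $t$. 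The projection of this full swap to the first $T-1$ coordinates is the prescribed prefix swap, nonnegativity is preserved at every step, and the last margin $x^{T-1}_{ij}$ is untouched. Thus $\Bx^*$ and $\By$ share the same prefix marginal and the same margin $x^{T-1}_{ij}$.

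Phase 2 connects $\Bx^*$ to $\By$ using only crossing swappings at time $T-1$. Equality of prefix marginals means that for each prefix-path $\bar\rho$ of length $T-1$ both tables contain the same number $n_{\bar\rho}$ of full paths extending $\bar\rho$, each appending some value $s_T=j$. Fixing the terminal-prefix state $i=s_{T-1}$, I arrange the full paths with $s_{T-1}=i$ into a two-way table whose rows are the prefixes $\bar\rho$ ending at $i$ and whose columns are the values $j\in\cS$; its row sums are the $n_{\bar\rho}$ and its column sums are the margins $x^{T-1}_{ij}$, all of which agree between $\Bx^*$ and $\By$. A crossing swap at $(i,T-1)$ between full paths with prefixes $\bar\rho,\bar\rho'$ and terminal values $j,j'$ is precisely the basic $2\times2$ swap $e_{\bar\rho j}+e_{\bar\rho' j'}\mapsto e_{\bar\rho j'}+e_{\bar\rho' j}$ of this two-way table, and by the classical connectivity of two-way tables with fixed row and column sums (see \citet{diaconis-sturmfels}) these connect the $\Bx^*$-subtable to the $\By$-subtable for each $i$. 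Doing this for every $i\in\cS$ gives $\Bx^*=\By$, completing the induction.

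The routine parts are the margin-preservation check and the Phase 2 reduction to two-way tables. The step I expect to be the main obstacle is the lifting in Phase 1: one must verify that the connecting sequence guaranteed at the length-$(T-1)$ level can be realized, one swap at a time, by genuine crossing path swappings on the length-$T$ table without ever creating negative entries, and that the two-phase decomposition ``match the prefix marginal, then redistribute the terminal coordinate'' is complete, i.e.\ that no residual interaction between the separator $\{T-1\}$ and the earlier coordinates survives. Both points rest on the running-intersection structure of the path graph, which is exactly what makes the model decomposable and is why \citet{dobra-2003bernoulli} applies.
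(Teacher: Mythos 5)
Your argument is correct, but it is worth noting that the paper itself supplies no proof of this proposition: it simply invokes the general theorem of \citet{dobra-2003bernoulli} that every decomposable graphical model admits a Markov basis of square-free degree two moves, and identifies those moves with the crossing path swappings. What you have written is a self-contained specialization of Dobra's divide-and-conquer argument to the path graph $1-2-\cdots-T$: the margin-preservation check is routine and sound; Phase 1 correctly lifts the inductive connecting sequence for the prefix marginal, since any prefix-path with positive count has a full extension with positive count, so each lifted swap keeps the table nonnegative and projects to the prescribed prefix swap; and Phase 2 correctly decomposes the residual problem, for each separator value $i=s_{T-1}$, into a two-way table (prefixes ending at $i$ versus terminal states) with matching row and column sums, where the crossing swaps at $(i,T-1)$ are exactly the basic two-way moves. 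One small slip: the two paths in a crossing swap at $(i,t)$ do not in general \emph{share} the edge into $(i,t)$ --- they merely both visit the node, each retaining its own incoming edge --- but your subsequent multiset argument does not actually use that claim, so the conclusion $z^\tau_{ij}=0$ for all $\tau,i,j$ stands. The trade-off is the usual one: the citation buys brevity and generality, while your induction makes explicit, for this specific model, exactly why the two-phase reduction ``match the prefix marginal, then redistribute the terminal coordinate'' is exhaustive, which is the content of the running-intersection property for the path graph.
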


Therefore once $\Bx$ and $\By$ of the same fiber are brought to be
edge-wise equivalent, they can be further brought to be identical as
multisets by crossing path swappings.  Therefore for constructing a
Markov basis for THMC model, we need to find
an additional finite set of moves, which can make two elements of the
same fiber edge-wise equivalent.

Crossing path swapping has the following implication.
Suppose that $\Bx$ dominates $\By$  in the partial path 
$t \ncolon s_t\dots s_{t^\prime}$. 
Then clearly there exists a path $\omega$ in $\Bx$ passing through the
edge $t\ncolon s_t s_{t+1}$.   Similarly there exists a path $\omega'$
in $\Bx$ passing through the edge $t+1\ncolon s_{t+1} s_{t+2}$.  
We can then make the crossing path swapping of these two paths and make 
$\omega$ go through the partial path $t\ncolon s_t s_{t+1} s_{t+2}$.
Repeating this procedure, by a sequence of crossing path swappings of
paths of $\Bx$, we can assume that $\Bx$ contains a path $\omega$ which
goes through the partial path $t\ncolon s_t \dots s_{t^\prime}$.  
We state this as a lemma. 

\begin{lemma}
\label{rem:partial-dominance}
Let $\Bx$ and $\By$ be two elements of the same fiber.
If $\Bx$ dominates $\By$  in the partial path 
$t \ncolon s_t \dots s_{t^\prime}$,
then by crossing path swappings of paths in $\Bx$, we can transform
$\Bx$ to $\tilde \Bx$, such that there exits a path in $\tilde \Bx$ going
through the partial path $t\ncolon s_t \dots s_{t'}$.
\end{lemma}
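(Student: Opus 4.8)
The plan is to induct on the length $\ell = t' - t$ of the partial path $t\ncolon s_t \dots s_{t'}$, constructing the required path edge by edge: at each stage I keep a path of $\Bx$ that already realizes an initial segment of the partial path, locate a second path carrying the next dominated edge, and splice the two by a single crossing path swapping at their common node. For the base case $\ell = 1$ the partial path is the lone edge $t\ncolon s_t s_{t+1}$, and dominance gives $z^t_{s_t s_{t+1}} > 0$, hence $x^t_{s_t s_{t+1}} > y^t_{s_t s_{t+1}} \ge 0$ and so $x^t_{s_t s_{t+1}} \ge 1$. Thus $\Bx$ already contains a path through this edge and I may take $\tilde\Bx = \Bx$ with no swapping.

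For the inductive step I would assume the statement for partial paths of length $\ell - 1$. Since dominance in $t\ncolon s_t \dots s_{t'}$ entails dominance in the truncation $t\ncolon s_t \dots s_{t'-1}$, the inductive hypothesis yields crossing path swappings transforming $\Bx$ into some $\Bx'$ that contains a path $\omega$ through $t\ncolon s_t \dots s_{t'-1}$. Crucially, crossing path swappings are moves for $H_1$ and therefore fix every edge frequency $x^t_{ij}$; consequently the components of $\Bz' = \Bx' - \By$ agree with those of $\Bz$, and in particular ${z'}^{t'-1}_{s_{t'-1} s_{t'}} = z^{t'-1}_{s_{t'-1} s_{t'}} > 0$, giving ${x'}^{t'-1}_{s_{t'-1} s_{t'}} \ge 1$. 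Hence $\Bx'$ also contains a path $\omega'$ through the edge $t'-1\ncolon s_{t'-1} s_{t'}$. Both $\omega$ and $\omega'$ pass through the node $(s_{t'-1}, t'-1)$, so they cross there (if $\omega = \omega'$ the desired path already exists and I stop); swapping their tails after time $t'-1$ as in (\ref{eq:crossing}) yields a path whose segment up to time $t'-1$ is that of $\omega$, hence through $t\ncolon s_t \dots s_{t'-1}$, and whose first subsequent edge is $t'-1\ncolon s_{t'-1} s_{t'}$. This path runs through all of $t\ncolon s_t \dots s_{t'}$, and the table obtained after this final swapping is the required $\tilde\Bx$.

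The one delicate point --- and the closest thing to an obstacle --- is guaranteeing that the dominance hypothesis survives the reshaping of $\Bx$ into $\Bx'$ demanded by the inductive hypothesis. This rests on the fact that a crossing path swapping, being a move for the decomposable model $H_1$ (Proposition \ref{prop:swapping}), alters no edge count $x^t_{ij}$, so the strict inequality $z^{t'-1}_{s_{t'-1} s_{t'}} > 0$ remains available after those swappings. Once this invariance is in hand the argument is purely combinatorial bookkeeping, the only further case needing explicit mention being the degenerate one $\omega = \omega'$.
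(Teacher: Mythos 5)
Your proof is correct and follows essentially the same route as the paper: the paper builds the desired path by repeatedly splicing, at a shared node, a path realizing an initial segment with a path carrying the next dominated edge (whose existence follows from $z^{t}_{ij}>0$ and $y^{t}_{ij}\ge 0$), and it likewise records that crossing path swappings leave all $z^t_{ij}$ unchanged. Your version merely formalizes the paper's ``repeating this procedure'' as an induction on the length of the partial path, and correctly flags the degenerate case $\omega=\omega'$.
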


Note that during the sequence of crossing path swappings, the values of 
$\{z^t_{ij}\}$ stay the same.   
By Lemma \ref{rem:partial-dominance}, we can identify all $\Bx$'s with
the same edge frequencies, whenever  we are concerned about 
decreasing $\sum_{t,i,j} |z^t_{ij}|$ for our distance reduction argument
in Sections  \ref{sec:two-state} and \ref{sec:three-period}.

\subsection{Some properties of moves for THMC model}
\label{subsec:preparations}
In this subsection we present an important class of moves for THMC
model. 
Consider two different states $i_1 \neq i_2$ at time $t$ and two different
states  $j_1\neq j_2$ at time $t+1$.  Let 
$t\ncolon i_1 j_1$  and $t\ncolon i_2 j_2$ be two edges joining these states.
Now consider %
$t' > t$ and two edges
$t'\ncolon i_1 j_2$  and $t'\ncolon i_2 j_1$, which swaps the
transitions $i_1 j_1$, $i_2 j_2$ of time $t$.
Suppose that all of $x^t_{i_1 j_1}$, $x^t_{i_2 j_2}$, 
$x^{t'}_{i_1 j_2}$, $x^{t'}_{i_2 j_1}$ are positive.
Choose (not necessarily distinct) four paths $\omega_1, \omega_2, \omega_3, \omega_4$ from $\Bx$
passing through
$t\ncolon i_1 j_1$, $t\ncolon i_2 j_2$, 
$t'\ncolon i_1 j_2$,  $t'\ncolon i_2 j_1$, respectively:
\begin{align*}
\omega_1 &= (s_{11}\dots,s_{1,t-1},i_1,j_1,
s_{1,t+2},\dots,s_{1T}),
\ 
\omega_2 = (s_{21}\dots,s_{2,t-1},i_2,j_2,
s_{2,t+2},\dots,s_{2T})
\\
\omega_3 &= (s_{31}\dots,s_{3,t'-1},i_1,j_2, s_{3,t'+2},\dots,s_{3T})
, \ \omega_4 = (s_{41}\dots,s_{4,t'-1},i_2,j_1, s_{4,t'+2},\dots,s_{4T}).
\end{align*}
Then we consider swapping the transitions in 
$\{t\ncolon i_1 j_1,t\ncolon i_2 j_2\}$
and those in $\{t'\ncolon i_1 j_2,t'\ncolon i_2 j_1\}$ in $\Bx$ as  
$\{\omega_1, \omega_2, \omega_3, \omega_4\} \leftrightarrow
\{\tilde\omega_1, \tilde\omega_2, \tilde\omega_3, \tilde\omega_4\}$, where
\begin{align*}
\tilde\omega_1 &= (s_{11}\dots,s_{1,t-1},i_1,
j_2,s_{2,t+2},\dots,s_{2T}),
\ 
\tilde\omega_2 = (s_{21}\dots,s_{2,t-1},i_2,
j_1,s_{1,t+2},\dots,s_{1T}),
\\
\tilde\omega_3 &= (s_{31}\dots,s_{3,t'-1},i_1,
j_1, s_{4,t'+2},\dots,s_{4T})
, \ 
\tilde\omega_4 = (s_{41}\dots,s_{4,t'-1},i_2,
j_2, s_{3,t'+2},\dots,s_{3T}). 
\end{align*}
The resulting move  $\Bz=\{z(\omega) \mid \omega \in \cS^T\}$ 
is written as
\[
  z(\omega) = \left\{
 \begin{array}{rl}
  1 & \text{if } \omega = \omega_1,\ldots,\omega_4,\\
  -1& \text{if } \omega = \tilde \omega_1,\ldots,\tilde \omega_4,\\
  0 & \text{otherwise.}
 \end{array}
\right. 
\]
We call  this move a ``2 by 2 swap''.
The corresponding move graph is depicted as follows,
\begin{equation}
\label{eqfig:2by2}
\begin{tikzpicture}%
\path (0,-2.5) coordinate (midy);
\foreach \i in {1,...,4} {   \foreach \j in {1,2}    {
  \path (\j,-\i) coordinate (V\i\j);  \fill (V\i\j) circle (2pt); }}
\foreach \i in {1,...,4} {   \foreach \j in {4,5}    {
  \path (\j,-\i) coordinate (V\i\j);  \fill (V\i\j) circle (2pt); }}
\draw (1,-1) node [left] {$i_1$};
\draw (4,-1) node [left] {$i_1$};
\draw (1,-3) node [left] {$i_2$};
\draw (4,-3) node [left] {$i_2$};
\draw (2,-2) node [left] {$j_1$};
\draw (5,-2) node [left] {$j_1$};
\draw (2,-4) node [left] {$j_2$};
\draw (5,-4) node [left] {$j_2$};
\drbl{11}{22}; \drbl{31}{42}; \drbl{14}{45}; \drbl{34}{25};
\drwl{11}{42}; \drwl{31}{22}; \drwl{14}{25}; \drwl{34}{45};
\end{tikzpicture}
\end{equation}
We denote the set of 2 by 2 swaps sharing the move graph
(\ref{eqfig:2by2}) by  
\begin{equation}
\label{eq:2by2-swap}
\{t\ncolon i_1 j_1,t\ncolon i_2 j_2\} \leftrightarrow 
\{t'\ncolon i_1 j_2,t'\ncolon i_2 j_1\}.
\end{equation}

As noted above, $\omega_1, \omega_2, \omega_3$ and $\omega_4$ are not
necessarily distinct. 
It may be the case that $\omega_1 = \omega_3$, 
$\omega_1 = \omega_4$, 
$\omega_2 = \omega_3$ or $\omega_2 = \omega_4$.
In the 2 by 2 swap in
(\ref{eq:2by2-swap}) and (\ref{eqfig:2by2}), the paths are only
partially specified.   Therefore it corresponds to a set 
of moves in term of fully specified paths.
In particular, depending on which paths are the same, 
a 2 by 2 swap may 
be of degree 2,3, or 4 in terms of the fully specified paths.

The above 2 by 2 swap can be extended  in various ways.  First it 
can be extended to a permutation of more states
\begin{equation}
\label{eq:m-by-m-swap}
\{t\ncolon i_1 j_1, \dots , t\ncolon i_m j_m\}\ \leftrightarrow\ 
\{t'\ncolon i_1 j_{\sigma(1)}, \dots, t'\ncolon i_m j_{\sigma(m)}\},
\end{equation}
where $\sigma$ is an element of the symmetric group $S_m$.  We call this move
an ``$m$ by $m$ permutation''.  The degree of this move is at most
$2m$ in terms of paths.  
The degree is smaller if $\sigma(l)=l$ for some $l$, because swapping 
$t\ncolon i_l j_l$ and $t'\ncolon i_l j_l=t'\ncolon i_l j_{\sigma(l)}$ does not change anything.

Second, it can be extended to swapping of two partial paths
\[
t\ncolon \underbrace{i * \dots * j}_k
\quad \leftrightarrow \quad 
t'\ncolon \underbrace{i * \dots * j}_k  \ , \qquad (t\neq t'),
\]
where `*' is arbitrary.
For example we can swap $t\ncolon 112$ and $t'\ncolon 122$, ($t\neq t'$):
\begin{align}
 &\{ s_1\dots s_{t-1}112s_{t+3}\dots s_T,   s'_1\dots
 s'_{t'-1}122s'_{t'+3}\dots s'_T \}  
 \nonumber \\
 & \qquad \leftrightarrow
 \{ s_1\dots s_{t-1}122s_{t+3}\dots s_T,   s'_1\dots
 s'_{t'-1}112s'_{t'+3}\dots s'_T \}, 
\label{eq:1:1swap}
\end{align}
which can be depicted as 
\begin{equation}
\label{eqfig:11swap}
\begin{tikzpicture}%
\path (0,-1.5) coordinate (midy);
\path (V11) node [above] {$t$};
\path (V13) node [above] {$t'$};
\foreach \i in {1,2} {   \foreach \j in {1,...,5}    {
  \path (\j,-\i) coordinate (V\i\j);  \fill (V\i\j) circle (1pt); }}
\drbl{11}{12}; \drbl{12}{23}; \drbl{13}{24}; \drbl{24}{25};
\drwl{11}{22}; \drwl{22}{23}; \drwl{13}{14}; \drwl{14}{25};
\end{tikzpicture}
\end{equation}
%Note that,  as in the case of a 2 by 2 swap, each of the above extensions 
%corresponds to a set of moves in terms of paths.

\section{Markov basis for two state space case}
\label{sec:two-state}

In this section we give an explicit form of a Markov basis for the case of $S=2$.
The length $T$ of the Markov chain is arbitrary.

\begin{theorem}
\label{thm:s2}
A Markov basis for $S=2$, $T\ge 4$,  consists of the following moves:
\begin{enumerate}
\setlength{\itemsep}{0pt}
\item Crossing path swappings.  
\item Degree one moves.
 \item 2 by 2 swaps of the following form:   
       \begin{tikzpicture}[baseline=-1.2cm,scale=0.7]
	\foreach \i in {1,2} {   \foreach \j in {1,...,4}    {
	\path (\j,-\i) coordinate (V\i\j);  \fill (V\i\j) circle (1pt); }}
	\drbl{11}{12}; \drbl{21}{22}; \drbl{13}{24}; \drbl{23}{14};
	\drwl{11}{22}; \drwl{21}{12}; \drwl{13}{14}; \drwl{23}{24};
       \end{tikzpicture}
\item Moves of the following form: 
 \begin{tikzpicture}[baseline=-1.2cm,scale=0.7]
  \foreach \i in {1,2} {   \foreach \j in {1,...,5}    {
  \path (\j,-\i) coordinate (V\i\j);  }}
  \drbl{11}{12}; \drbl{12}{23}; \drbl{13}{24}; \drbl{24}{25};
  \drwl{11}{22}; \drwl{22}{23}; \drwl{13}{14}; \drwl{14}{25};
 \end{tikzpicture} \  .  \\
      This includes the case of a double transitions in the middle: 
 \begin{tikzpicture}[baseline=-1.1cm,scale=0.7]
  \foreach \i in {1,2} {   \foreach \j in {1,...,4}    {
  \path (\j,-\i) coordinate (V\i\j);  }}
  \drbl{11}{12}; \drbl{12}{23}; \drbl{23}{24};
  \drwl{11}{22}; \drwl{22}{23}; \drwl{12}{13}; \drwl{13}{24};
  \draw (2.8,-1.5) node {2};
 \end{tikzpicture}   \ .
\end{enumerate}
 For $T=3$ a Markov basis consists of the first three types of moves.
\end{theorem}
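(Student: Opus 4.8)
The plan is to prove Theorem~\ref{thm:s2} by a distance-reducing argument in the style of \citet{diaconis-sturmfels}. Given $\Bx,\By$ in a common fiber $\cF_{\Bb}$, write $\Bz=\Bx-\By$ and take as the principal potential the edge discrepancy $D(\Bx,\By)=\sum_{t,i,j}|z^t_{ij}|$. If $D=0$ then $\Bx$ and $\By$ are edge-wise equivalent and Proposition~\ref{prop:swapping} connects them by crossing path swappings (moves of type~1); so it suffices to show that whenever $D>0$ some move of type~2, 3, or~4 can be added to or subtracted from $\Bx$, keeping it in $\Z^{S^T}_{\ge 0}$, while strictly lowering a suitable potential. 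Throughout I would use Lemma~\ref{rem:partial-dominance} freely: since crossing path swappings do not alter any $z^t_{ij}$, I may always assume that $\Bx$ carries whatever dominated partial path a candidate move needs, provided the relevant edges are dominated by $\Bx$.

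I would organize the reduction around the node-imbalance profile. For $S=2$ set $a_t=z^t_1=-z^t_2$; the boundary conditions give $a_1=a_T=0$ and, by (\ref{eq:zsum2T1}), $\sum_{t=2}^{T-1}a_t=0$. Putting $b_t:=z^t_{11}$, each slice is forced into the shape $(z^t_{11},z^t_{12},z^t_{21},z^t_{22})=(b_t,\,a_t-b_t,\,a_{t+1}-b_t,\,b_t-a_t-a_{t+1})$, and the fiber constraints collapse to $\sum_t b_t=0$ together with $\sum_t a_t=0$. In the regime $a_t\equiv 0$ every slice reduces to $(b_t,-b_t,-b_t,b_t)$; as $\sum_t b_t=0$ and $D>0$, there are slices with $b_{t_1}>0>b_{t_2}$, so $\Bx$ dominates the four edges $t_1\ncolon 11$, $t_1\ncolon 22$, $t_2\ncolon 12$, $t_2\ncolon 21$, and the type-3 swap (a $2\times 2$ swap of the form (\ref{eq:2by2-swap})) placed at $(t_1,t_2)$ moves $b_{t_1}$ and $b_{t_2}$ toward $0$, lowering $D$ by $8$ while staying nonnegative.

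In the complementary regime some interior $a_\tau\neq 0$, and here I would deploy the type-4 moves. The partial-path swap (\ref{eq:1:1swap}) transports one unit of imbalance between two interior times whose gap is at least two, while its double-transition degeneracy handles adjacent times; together they can move a unit from a time with $a>0$ to a time with $a<0$, reducing $\sum_t|a_t|$ by $2$. When the fiber is too thin to supply the two paths a type-4 swap consumes, the required local rearrangement is exactly a degree-one move (type~2), which is why that class must be retained. I would phrase the whole induction with the lexicographic potential $(\sum_t|a_t|,\,D)$: types~2 and 4 first drive $\sum_t|a_t|$ to zero (and need not decrease $D$ on their own), after which type~3 drives $D=4\sum_t|b_t|$ to zero. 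This also settles the last sentence of the theorem: for $T=3$ the only interior time is $t=2$, so $\sum_{t=2}^{T-1}a_t=a_2=0$ forces $a\equiv 0$, the second regime never occurs, no type-4 move is needed, and since $A$ has no repeated columns when $T=3$ the degree-one class is in any case empty.

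The step I expect to be the main obstacle is preserving nonnegativity while reducing the node imbalance. Knowing only $a_\tau>0$ tells me that $\Bx$ dominates some in-edge and some out-edge at the node $(1,\tau)$, hence dominates some length-two partial path through it; but this dominated partial path may be any of $211$, $112$, $212$, or $111$ (together with their time reverses and their images under $1\leftrightarrow 2$), and each must be matched either to an admissible type-4 shape or, in a thin fiber, to a degree-one move, and when the imbalance has to be carried across a long gap one may be forced to route it one step at a time, so that $\sum_t|a_t|$ only decreases after several constant steps and a secondary measure (the distance the surplus unit must travel) is needed to guarantee progress. Exhausting these local configurations up to the time-reversal symmetry of Lemma~\ref{lemma:terminal} and the state symmetry, and checking in each that the chosen move keeps $\Bx$ nonnegative and lowers the lexicographic potential, is where the genuine bookkeeping lies; it is also the part that fails to extend verbatim to $S\ge 3$.
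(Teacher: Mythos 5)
Your setup is sound, and your parametrization of the edge-count differences by the node-imbalance profile $a_t=z^t_1$ and $b_t=z^t_{11}$ is a genuinely nice reformulation: the identity $(z^t_{11},z^t_{12},z^t_{21},z^t_{22})=(b_t,\,a_t-b_t,\,a_{t+1}-b_t,\,b_t-a_t-a_{t+1})$ is correct, and your treatment of the balanced regime $a_t\equiv 0$ (including the $T=3$ base case and the observation that degree-one moves are vacuous for $T=3$) is complete and arguably cleaner than the paper's argument, which instead runs an induction on $T$ driven by the boundary potential $\sum_{i,j}|z^1_{ij}|+\sum_{i,j}|z^{T-1}_{ij}|$ and a case split on the sign patterns of the first and last slices.

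However, there is a genuine gap in the unbalanced regime, and it is not routine bookkeeping: it is exactly where the paper spends its entire four-case analysis. You assert that whenever some interior $a_\tau\neq 0$ a type-2 or type-4 move reduces $\sum_t|a_t|$ by $2$, but what dominance gives you is only that $\Bx$ dominates \emph{some} length-two partial path through the node $(1,\tau)$, which may be $111$, $211$ or $212$ rather than the $112$/$122$ shapes that the swaps (\ref{eq:1:1swap}) consume; you flag this yourself but do not resolve it. The paper's Case 3 shows the difficulty is real: there the sign pattern forces $z^2_1>0$ and $z^{T-1}_1>0$ (so $\sum_t|a_t|>0$), yet after the available degree-one and type-4 reductions are exhausted, the residual configuration is resolved by a 2 by 2 swap, which leaves every $a_t$ unchanged. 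So the claim that types 2 and 4 alone drive $\sum_t|a_t|$ to zero before type 3 is ever invoked is false as stated, and your lexicographic potential $(\sum_t|a_t|,\,D)$ --- which you already concede may need a third ``transport distance'' component --- is not shown to decrease at every step. To close the argument you would need to do what the paper's proof does: classify the dominated local configurations around an imbalanced node (up to the state symmetry and the time-reversal symmetry of Lemma \ref{lemma:terminal}), show for each that one of the four listed move classes applies without violating nonnegativity, and exhibit a single well-founded measure that every such application strictly decreases. As it stands, the hard half of Theorem \ref{thm:s2} is deferred rather than proved.
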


\begin{proof}
 We use an induction on the length $T$ of the Markov chain combined with
 the distance reduction argument in \citep{takemura-aoki-2005bernoulli}
 for the distance $\vert \Bz \vert := \sum_{t,i,j} |z^t_{ij}|$.

 If $z^1_{ij}=0$, $\forall i,j$, then we can ignore the transitions
 $1\ncolon ij$, $i,j\in \cS$, in both $\Bx$ and $\By$ and regard the
 chain as starting 
 from $t=2$.  
 Similarly if $z^{T-1}_{ij}=0$, $\forall i,j$, then we can regard the
 chain as ending at time $t=T-1$. Therefore we can assume that
 $z^1_{ij}\neq 0$ for some $i,j$ and $z^{T-1}_{ij}\neq 0$ for some
 $i,j$. 
 In view of the symmetry of the model and the fact that
 $z^1_1=z^1_2=0$, we can assume without loss of generality that
 $z^1_{11}>0$ and $z^1_{12}<0$:   
 \[
 \begin{tikzpicture}[baseline=-15mm]
  \foreach \i in {1,2} {   \foreach \j in {1,2}    {
  \path (\j,-\i) coordinate (V\i\j); \fill (V\i\j) circle (2pt); }}
  \draw (V11) node [above] {$1$};
  \draw (V12) node [above] {$2$};
  \drb{11}{12}; \drw{11}{22};
 \end{tikzpicture} 
 \]

 As the initial step of the induction we assume $T=3$.  
 Then by (\ref{eq:zsum2T1}) $z^2_{i}=0$, $i=1,2$.  
 This necessarily implies that the edge-sign pattern %
 of $\Bz$ is of the following form: 
 \[
 \begin{tikzpicture}[baseline=-1.5cm]%
  \foreach \i in {1,2} {   \foreach \j in {1,...,3}    {
  \path (\j,-\i) coordinate (V\i\j); \fill (V\i\j) circle (2pt); }}
  \draw (V11) node [above] {$1$};
  \draw (V12) node [above] {$2$};
  \draw (V13) node [above] {$3$};
  \drb{11}{12}; \drb{21}{22}; \drb{12}{23}; \drb{22}{13};
  \drw{11}{22}; \drw{21}{12}; \drw{12}{13}; \drw{22}{23};
 \end{tikzpicture} 
 \]
 By Lemma \ref{rem:partial-dominance}, we can assume that 
 $\Bx$ contains the path $112$ and $221$. 
 Therefore by adding the 2 by 2 swap 
\[
 \begin{tikzpicture}[baseline=-1.5cm]
  \foreach \i in {1,2} {   \foreach \j in {1,...,3}    {
  \path (\j,-\i) coordinate (V\i\j); \fill (V\i\j) circle (1pt); }}
  \draw (V11) node [above] {$1$};
  \draw (V12) node [above] {$2$};
  \draw (V13) node [above] {$3$};
  \drwl{11}{12}; \drwl{21}{22}; \drwl{12}{23}; \drwl{22}{13};
  \drbl{11}{22}; \drbl{21}{12}; \drbl{12}{13}; \drbl{22}{23};
 \end{tikzpicture}
\]
 to $\Bx$, we can reduce $|\Bz|$ by eight.
 This takes care of the case of $T=3$. 

 Now consider the case of $T \ge 4$.  First we explain our strategy.  
 We claim that by moves in Theorem \ref{thm:s2} we can always achieve
 one of the 
 following two things: 
 \begin{enumerate}
  \item[1)] $|\Bz|$ is reduced without increasing 
	  $\sum_{i,j}|z^1_{ij}| + \sum_{i,j}|z^{T-1}_{ij}|$;
  \item[2)] $\sum_{i,j}|z^1_{ij}|+\sum_{i,j}|z^{T-1}_{ij}|$ is reduced. 
 \end{enumerate}
 Then either $|\Bz|$ can be reduced to zero
 and we are done, or if we can not further decrease $|\Bz|$ then,
 we can now reduce $\sum_{i,j}|z^1_{ij}| + \sum_{i,j}|z^{T-1}_{ij}|$,
 noting that  $\sum_{i,j}|z^1_{ij}| + \sum_{i,j}|z^{T-1}_{ij}|$ has not increased.
 By this strategy we can either make $|\Bz|=0$ or employ induction on $T$.
 Therefore Theorem \ref{thm:s2} is implied by the above claim.
 
 As a preliminary step consider a path of $\Bx$
 passing through $s=1, t=2$.  If this path ever moves to $s=2$ and then
 comes back to $s=1$ before $T-1$, i.e., if this path contains
 $t\ncolon 12$ and $t'\ncolon 
 21$, $t<t'<T-1$, then we can apply a degree 1 move of the following form 
\[
\begin{tikzpicture}[baseline=-1.2cm]
\foreach \i in {1,2} {   \foreach \j in {1,...,11}    {
 \path (\j,-\i) coordinate (V\i\j);}}
 \foreach \i in {1,2} {   \foreach \j in {1,2,4,5,7,8,10,11}    {
  \fill (V\i\j) circle (1pt);
 }}
 \draw (V11) node [above] {$1$};
 \draw (V14) node [above] {$t'-t+1$};
 \draw (V17) node [above] {$t$};
 \draw (V110) node [above] {$t'$};
 \drbl{11}{22};
 \drbl{22}{23}; 
 \drbl{23}{24}; 
 \drbl{24}{15}; 
 \drwl{17}{28}; 
 \drwl{28}{29};
 \drwl{29}{210};
 \drwl{210}{111};
\end{tikzpicture} 
\]
and reduce %
$\sum_{i,j}|z^1_{ij}|$ without affecting  $\sum_{i,j}|z^{T-1}_{ij}|$.
Therefore in the following we only need
to consider the case that
every path of $\Bx$ passing through $s=1, t=2$ either always stays at $s=1$ until $T-1$
or moves once  to $s=2$ and then stays at $s=2$ thereafter  until $T-1$.

We now check four possible %
sign patterns of $\{z^{T-1}_{ij}\}$.
\begin{center}
1) 
\begin{tikzpicture}[baseline=-15mm]
\foreach \i in {1,2} {   \foreach \j in {1,2}    {
  \path (\j,-\i) coordinate (V\i\j); \fill (V\i\j) circle (2pt); }}
\draw (V11) node [above] {$T-1$};
\draw (V12) node [above] {$T$};
\drw{11}{12}; \drb{21}{12};
\end{tikzpicture} 
\quad
2) 
\begin{tikzpicture}[baseline=-15mm]
\foreach \i in {1,2} {   \foreach \j in {1,2}    {
  \path (\j,-\i) coordinate (V\i\j); \fill (V\i\j) circle (2pt); }}
\draw (V11) node [above] {$T-1$};
\draw (V12) node [above] {$T$};
\drw{11}{22}; \drb{21}{22};
\end{tikzpicture} 
\quad
3) 
\begin{tikzpicture}[baseline=-15mm]
\foreach \i in {1,2} {   \foreach \j in {1,2}    {
  \path (\j,-\i) coordinate (V\i\j); \fill (V\i\j) circle (2pt); }}
\draw (V11) node [above] {$T-1$};
\draw (V12) node [above] {$T$};
\drb{11}{22}; \drw{21}{22};
\end{tikzpicture} 
\quad
4) 
\begin{tikzpicture}[baseline=-15mm]
\foreach \i in {1,2} {   \foreach \j in {1,2}    {
  \path (\j,-\i) coordinate (V\i\j); \fill (V\i\j) circle (2pt); }}
\draw (V11) node [above] {$T-1$};
\draw (V12) node [above] {$T$};
\drb{11}{12}; \drw{21}{12};
\end{tikzpicture}  
\end{center}

\medskip
\noindent {\bf Case 1.}\quad 
\begin{tikzpicture}[baseline=-12mm,scale=0.8]
\foreach \i in {1,2} {   \foreach \j in {1,...,5}    {
  \path (\j,-\i) coordinate (V\i\j);}}
\foreach \i in {1,2} {   \foreach \j in {1,2}    {
 \fill (V\i\j) circle (2pt); }}
\foreach \i in {1,2} {   \foreach \j in {4,5}    {
 \fill (V\i\j) circle (2pt); }}
\draw (V11) node [above] {$1$};
\draw (V12) node [above] {$2$};
\draw (V14) node [above] {$T-1$};
\draw (V15) node [above] {$T$};
\drb{11}{12}; \drw{11}{22};
\drw{14}{15}; \drb{24}{15};
\end{tikzpicture}  \vspace{0.2cm} \\ 
 As shown above, a path in $\Bx$ 
 passing through the node $s=1, t=1,2$ 
 has to either stay at $s=1$ until $T-1$ or has to transit once to $s=2$
 and stay $s=2$ thereafter until $T-1$. 
 Consider the latter case.  
 If %
 a transition from $s=1$ to $s=2$ occurs before $T-1$, then in view of
 $z^{T-1}_{21}>0$, we can assume that the path also contains 
 $T-1:21$ by Lemma \ref{rem:partial-dominance}.
 Then we can apply a degree one move to $\Bx$ and then 
 $\sum_{i,j}|z^1_{ij}|+\sum_{i,j}|z^{T-1}_{ij}|$ is reduced.
 Therefore we only need to consider the case where every path in $\Bx$
 passing through the node $s=1, t=1,2$ move to $s=2$ at $T-1$.  
 By Lemma \ref{rem:partial-dominance}, we can assume that there is no
 transition $t \ncolon 12$, $t=2, \dots, T-1$ in $\Bx$ and therefore
 $z^{t}_{12}\le 0$, for $t=2,\dots,T-1$. 
 Then $z^{T-1}_{12} > 0$ in view of $z^1_{12}<0$.
 Interchanging the roles of $\Bx$ and $\By$ and reversing the time
 direction we similarly have $z^1_{21}<0$.   
 From the fact that $z^1_2=z^T_2=0$, 
 we have $z^1_{22}>0$ and $z^{T-1}_{22}<0$, i.e. $\Bz$ has a sign
 pattern 
 \[
 \begin{tikzpicture}[baseline=-15mm]%
  \foreach \i in {1,2} {   \foreach \j in {1,...,5}    {
  \path (\j,-\i) coordinate (V\i\j);}}
  \foreach \i in {1,2} {   \foreach \j in {1,2}    {
  \fill (V\i\j) circle (2pt); }}
  \foreach \i in {1,2} {   \foreach \j in {4,5}    {
  \fill (V\i\j) circle (2pt); }}
  \draw (V11) node [above] {$1$};
  \draw (V12) node [above] {$2$};
  \draw (V14) node [above] {$T-1$};
  \draw (V15) node [above] {$T$};
  \drb{11}{12}; \drw{11}{22};
  \drb{21}{22}; \drw{21}{12};
  \drw{14}{15}; \drb{24}{15};
  \drb{14}{25}; \drw{24}{25};
 \end{tikzpicture}. 
 \]
 Therefore $|\Bz|$ is reduced by eight by a 2 by 2 swap.

 \vspace{0.2cm}
 \noindent {\bf Case 2.}\quad 
 \begin{tikzpicture}[baseline=-10mm,scale=0.8]
  \foreach \i in {1,2} {   \foreach \j in {1,...,5}    {
  \path (\j,-\i) coordinate (V\i\j);}}
  \foreach \i in {1,2} {   \foreach \j in {1,2}    {
  \fill (V\i\j) circle (2pt); }}
  \foreach \i in {1,2} {   \foreach \j in {4,5}    {
  \fill (V\i\j) circle (2pt); }}
  \draw (V11) node [above] {$1$};
  \draw (V12) node [above] {$2$};
  \draw (V14) node [above] {$T-1$};
  \draw (V15) node [above] {$T$};
  \drb{11}{12}; \drw{11}{22};
  \drw{14}{25}; \drb{24}{25};
 \end{tikzpicture}  \vspace{0.2cm}\\
 Since $z^1_{12}<0, z^{T-1}_{12}<0$, either there exists $2\le t\le T-1$
 such that $z^t_{ij}\ge 2$ or there exist $2\le t< t' \le T-1$ such that
 $z^t_{12}, z^{t'}_{12}>0$. 
 In the former case write $t'=t$.  
 By the symmetry with respect to time reversal of THMC model, we can
 assume that a path in $\Bx$ passing through the node $s=2, t=T, T-1$
 stays at $s=2$ until $t=2$ or transits once to $s=1$ and stays $s=1$
 until $t=2$.  
 By Lemma \ref{rem:partial-dominance}, we can assume that there exist
 two paths passing through $t-1:112$ and $t':122$, respectively: 
 \[
 \begin{tikzpicture}[baseline=-1.2cm,scale=0.7]
  \foreach \i in {1,2} {   \foreach \j in {1,...,5}    {
  \path (\j,-\i) coordinate (V\i\j);  }}
  \draw (V12) node [above] {$t$};
  \draw (V13) node [above] {$t'$};
  \drbl{11}{12}; \drbl{12}{23}; \drbl{13}{24}; \drbl{24}{25};
 \end{tikzpicture}  .
 \]
 Then by subtracting a %
 move of type 4,
 we can widen the difference $t'-t$. 
 By iterating this procedure until either $t=1$ or $t'=T-1$, 
 we can reduce $\sum_{i,j} |z^1_{ij}|+\sum_{i,j} |z^{T-1}_{ij}|$.

 \vspace{0.2cm}
 \noindent {\bf Case 3.}\quad 
 \begin{tikzpicture}[baseline=-10mm,scale=0.8]
  \foreach \i in {1,2} {   \foreach \j in {1,...,5}    {
  \path (\j,-\i) coordinate (V\i\j);}}
  \foreach \i in {1,2} {   \foreach \j in {1,2}    {
  \fill (V\i\j) circle (2pt); }}
  \foreach \i in {1,2} {   \foreach \j in {4,5}    {
  \fill (V\i\j) circle (2pt); }}
  \draw (V11) node [above] {$1$};
  \draw (V12) node [above] {$2$};
  \draw (V14) node [above] {$T-1$};
  \draw (V15) node [above] {$T$};
  \drb{11}{12}; \drw{11}{22};
  \drb{14}{25}; \drw{24}{25};
 \end{tikzpicture}  \vspace{0.2cm}\\
 We can assume $z^{T-1}_{11} \ge 0$ because we have already covered
 Case 1. 
 Then there has to be $2\le t \le T-1$ such that $z^t_{11} < 0$:
 \[
 \begin{tikzpicture}[baseline=-10mm,scale=0.8]
  \foreach \i in {1,2} {   \foreach \j in {1,...,6}    {
  \path (\j,-\i) coordinate (V\i\j);}}
  \foreach \i in {1,2} {   \foreach \j in {1,2}    {
  \fill (V\i\j) circle (2pt); }}
  \foreach \i in {1,2} {   \foreach \j in {5,6}    {
  \fill (V\i\j) circle (2pt); }}
  \fill (V13) circle (2pt);
  \fill (V14) circle (2pt);
  \draw (V11) node [above] {$1$};
  \draw (V12) node [above] {$2$};
  \draw (V13) node [above] {$t$};
  \draw (V15) node [above] {$T-1$};
  \draw (V16) node [above] {$T$};
  \drb{11}{12}; \drw{11}{22};
  \drb{15}{26}; \drw{25}{26};
  \drw{13}{14};
 \end{tikzpicture} \quad .  
 \]
 Also it can be easily seen that $z^1_{21} < 0$ (hence $z^1_{22}>0$) 
 is reduced to Case 1 by a symmetry consideration.  Therefore
 we can assume $z^1_{21} \ge 0$.  Then
 \[
 z^2_1 = z^1_{11} + z^1_{21} > 0, 
\]
 i.e., %
 $\Bx$ dominates $\By$ at the node $s=1, t=2$.
 Then  $z^2_2 < 0$ because  $z^2_1 + z^2_2=0$.
 Similar consideration shows that  $z^{T-1}_1 > 0$ and $z^{T-1}_2 < 0$.
 Suppose that a path of $\By$ passing %
 $1\ncolon 12$ arrives at $s=1$ at time $t$. 
 By Lemma \ref{rem:partial-dominance} we can assume that the path stays
 at $s=1$ at time $t+1$.  
 Denote the path by $w=(12s_3 \cdots s_{t-1}11s_{t+2}\cdots s_T)$. 
 Let $w^\prime = (112s_3 \cdots s_{t-1}1s_{t+2} \cdots s_T)$.
 Then the difference of $w$ and $w^\prime$ forms a degree one
 move with a sign pattern
 \[
 \begin{tikzpicture}[baseline=-10mm,scale=0.8]
  \foreach \i in {1,2} {   \foreach \j in {1,...,2}    {
  \path (\j,-\i) coordinate (V\i\j);}}
  \foreach \i in {1,2} {   \foreach \j in {1,2}    {
  \fill (V\i\j) circle (2pt); }}
  \draw (V11) node [above] {$1$};
  \draw (V12) node [above] {$2$};
  \drw{11}{12}; \drb{11}{22};
 \end{tikzpicture} \quad .  \vspace{0.2cm}\\
 \]
 By adding it to $\Bx$, we can reduce 
 $\sum_{i,j}|z^1_{ij}|$ by two without affecting
 $\sum_{i,j}|z^{T-1}_{ij}|$.  
 Therefore we only need to consider the case where there is no path of
 $\By$ passing the node $s=2, t=2$ and arriving  at $s=1$ at time $t$. 

 Since $z^2_1>0$, there has to be some $t'\le t$
 such that $z^{t'}_{12} > 0$.
 Now assume that %
 $t^\prime < T-2$ 
 and choose the minimum $t^\prime$ %
 with $z^{t^\prime}_{12}>0$.
 Then a path in $\Bx$ passing through $(s,t)=(1,2)$ and $t^\prime:12$   
 has to stay at $s=2$ from $t'+1$ to $T-1$.
 Recall $0 < z^{T-1}_1 = z^{T-2}_{11} + z^{T-2}_{21}$ and 
 suppose that $z^{T-2}_{11} > 0$.
 Then we can assume that there exists a path passing through
 $T-2:112$.
 Therefore we can apply type 4 move 
 \[
 \begin{tikzpicture}[baseline=-1.0cm,scale=0.8]
  \foreach \i in {1,2} {   \foreach \j in {1,...,6}    {
  \path (\j,-\i) coordinate (V\i\j); }}
  \fill (V11) circle (1pt);
  \fill (V12) circle (1pt);
  \fill (V21) circle (1pt);
  \fill (V22) circle (1pt);
  \fill (V23) circle (1pt);
  \fill (V14) circle (1pt);
  \fill (V15) circle (1pt);
  \fill (V25) circle (1pt);
  \fill (V16) circle (1pt);
  \fill (V26) circle (1pt);
  \draw (V11) node [above] {$t-1$};
  \draw (V12) node [above] {$t$};
  \draw (V14) node [above] {$T-2$};
  \draw (V16) node [above] {$T$};
  \drbl{11}{12}; \drbl{12}{23}; \drbl{14}{25}; \drbl{25}{26};
  \drwl{11}{22}; \drwl{22}{23}; \drwl{14}{15}; \drwl{15}{26};
 \end{tikzpicture}  .
 \]
 and we can reduce $\sum_{i,j}|z^{T-1}_{ij}|$ without affecting
 $\sum_{i,j}|z^1_{ij}|$.  
 Suppose that $z^{T-2}_{21} > 0$.
 Then we can assume that there exists a path passing through
 $T-3:2212$ and hence we can apply a degree one move
 \[
 \begin{tikzpicture}[baseline=-1.2cm,scale=0.8]
  \foreach \i in {1,2} {   \foreach \j in {1,...,4}    {
  \path (\j,-\i) coordinate (V\i\j);  \fill (V\i\j) circle (1pt); }}
  \draw (V11) node [above] {$T-2$};
  \draw (V14) node [above] {$T$};
  \draw (V21)--(V12)--(V23)--(V24);
  \draw [dotted] (V21)--(V22)--(V13)--(V24);
 \end{tikzpicture}
 \]
 to reduce $\sum_{i,j}|z^{T-1}_{ij}|$ without affecting
 $\sum_{i,j}|z^1_{ij}|$.  
 Therefore we only need to consider the case of $t'=t=T-2$.
 Since $z^{T-1}_1 >0$ and $z^{T-2}_{11}<0$, we have 
 $z^{T-2}_{21}>0$. 
 In the same way we also have $z^{T-2}_{12}<0$. 
 Hence $\Bz$ has a sign pattern
 \[
 \begin{tikzpicture}[baseline=-10mm,scale=0.8]
  \foreach \i in {1,2} {   \foreach \j in {1,2}{
  \path (\j,-\i) coordinate (V\i\j);
    \fill (V\i\j) circle (2pt);}}
  \draw (V11) node [above] {{\scriptsize$T-2$}};
  \draw (V12) node [above] {{\scriptsize \ $T-1$}};
  \drb{11}{22}; \drb{21}{12};
  \drw{11}{12}; \drw{21}{22};
 \end{tikzpicture}\ .
 \]
 By symmetry we can easily see that 
 $\Bz$ has a sign pattern
\[
 \begin{tikzpicture}[baseline=-12mm,scale=0.8]
  \foreach \i in {1,2} {  \foreach \j in {1,...,5}{
  \path (\j,-\i) coordinate (V\i\j);}}
  \foreach \i in {1,2} {   \foreach \j in {1,2}    {
  \fill (V\i\j) circle (2pt); }}
  \foreach \i in {1,2} {   \foreach \j in {4,5}    {
  \fill (V\i\j) circle (2pt); }}
  \draw (V11) node [above] {{$2$}};
  \draw (V12) node [above] {{$3$}};
  \drw{11}{22}; \drw{21}{12};
  \drb{11}{12}; \drb{21}{22};
  \draw (V14) node [above] {{\scriptsize $T-2$}};
  \draw (V15) node [above] {{\scriptsize $T-1$}};
  \drw{14}{15}; \drw{24}{25};
  \drb{14}{25}; \drb{24}{15};
 \end{tikzpicture}\ .  
\] 
 Then we can decrease $|\Bz|$ by a 2 by 2 swap.

 \vspace{0.2cm}
 \noindent {\bf Case 4.}\quad 
 \begin{tikzpicture}[baseline=-10mm,scale=0.8]
  \foreach \i in {1,2} {   \foreach \j in {1,...,5}    {
  \path (\j,-\i) coordinate (V\i\j);}}
  \foreach \i in {1,2} {   \foreach \j in {1,2}    {
  \fill (V\i\j) circle (2pt); }}
  \foreach \i in {1,2} {   \foreach \j in {4,5}    {
  \fill (V\i\j) circle (2pt); }}
  \draw (V11) node [above] {$1$};
  \draw (V12) node [above] {$2$};
  \draw (V14) node [above] {$T-1$};
  \draw (V15) node [above] {$T$};
  \drb{11}{12}; \drw{11}{22};
  \drb{14}{15}; \drw{24}{15};
 \end{tikzpicture} \vspace{0.2cm}\\
 By a consideration as in the previous case %
 there exists $t$ such that $z^t_{11}<0$: 
 \[
 \begin{tikzpicture}[baseline=-10mm,scale=0.8]
  \foreach \i in {1,2} {   \foreach \j in {1,...,6}    {
  \path (\j,-\i) coordinate (V\i\j);}}
  \foreach \i in {1,2} {   \foreach \j in {1,2}    {
  \fill (V\i\j) circle (2pt); }}
  \foreach \i in {1,2} {   \foreach \j in {5,6}    {
  \fill (V\i\j) circle (2pt); }}
  \fill (V13) circle (2pt);
  \fill (V14) circle (2pt);
  \draw (V11) node [above] {$1$};
  \draw (V12) node [above] {$2$};
  \draw (V13) node [above] {$t$};
  \draw (V15) node [above] {$T-1$};
  \draw (V16) node [above] {$T$};
  \drb{11}{12}; \drw{11}{22};
  \drb{15}{16}; \drw{25}{16};
  \drw{13}{14};
 \end{tikzpicture} \quad .  \\
 \]
 As in the previous case there should exist (the minimum) $t' \le t$ with   
 $z^{t'}_{12} >0$.
 By symmetry there also exists (the maximum) $t''\ge t+1$ with
 $z^{t''}_{21}>0$. 
 Then we can apply a degree 1 move 
 \[
 \begin{tikzpicture}[baseline=-10mm,scale=0.8]
  \foreach \i in {1,2} {   \foreach \j in {1,...,13}    {
  \path (\j,-\i) coordinate (V\i\j);}}
  \fill (V11) circle (1pt);
  \fill (V12) circle (1pt);
  \fill (V22) circle (1pt);
  \fill (V14) circle (1pt);
  \fill (V24) circle (1pt);
  \fill (V15) circle (1pt);
  \fill (V16) circle (1pt);
  \fill (V17) circle (1pt);
  \fill (V18) circle (1pt);
  \fill (V29) circle (1pt);
  \fill (V111) circle (1pt);
  \fill (V211) circle (1pt);
  \fill (V112) circle (1pt);
  \fill (V113) circle (1pt);
  \draw (V11) node [above] {$1$};
  \draw (V12) node [above] {$2$};
  \draw (V14) node [above] {$t''-t'+1$};
  \draw (V18) node [above] {$t'$};
  \draw (V111) node [above] {$t''$};
  \draw (V11)--(V22)--(V23)--(V24)--(V15)--(V16);
  \draw[dotted] (V17)--(V18)--(V29)--(V210)--(V211)--(V112)--(V113);
 \end{tikzpicture} \quad .  \\
\]
 to decrease
 $\sum_{i,j} |z^1_{ij}|$.
This completes the proof of Theorem \ref{thm:s2}.
\end{proof}

\section{Markov basis for the case of length of three}
\label{sec:three-period}

In this section we give an explicit form of a Markov basis for
the case of $T=3$. The number of states  $S=|\cS|$ is arbitrary.

Let $i_1, \dots, i_m$, $m\le S$,  be distinct elements of $\cS$. Similarly
let $j_1, \dots, j_m$, $m\le S$,  be distinct elements of $\cS$.
In the $m$ by $m$ permutation of (\ref{eq:m-by-m-swap}) let $t=1, t'=2$ and 
\[
\sigma(1)=m, \sigma(2)=1, \dots, \sigma(k)=m-1.
\]
Then the resulting set of $m$ by $m$ permutation is denoted by
\begin{align}
 &{\cal Z}(i_1,\dots,i_m; j_1,\dots,j_m)\ :  \nonumber \\ 
 & \qquad\qquad \{1\ncolon i_1 j_1, 1\ncolon i_2 j_2, \dots , 
 1\ncolon i_m j_m\}\ \leftrightarrow\  
 \{2\ncolon i_1 j_m, 2\ncolon i_2 j_1, \dots, 2 \ncolon i_m j_{m-1}\}.
 \label{eq:moves-T3}
\end{align}
A move graph of a permutation for $S=6$ with $m=4$, $(i_1, i_2, i_3,
i_4)=(1,2,4,5)$, $(j_1, j_2, j_3, j_4)=(1,3,5,6)$ is depicted in Figure  
\ref{fig:permutation}.
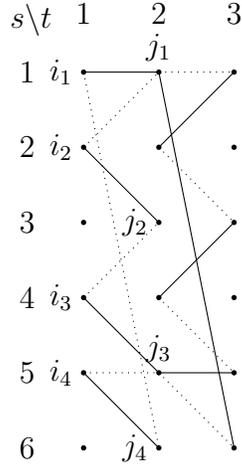
\begin{figure}[ht]
 \begin{center}
  \begin{tikzpicture}
   \foreach \i in {1,...,6} {   \foreach \j in {1,...,3}    {
  \path (\j,-\i) coordinate (V\i\j);  \fill (V\i\j) circle (1pt); }}
   \foreach \i in {1,...,6} {\draw (0.5,-\i) node [left] {\i};}
   \foreach \j in {1,...,3} {\draw (\j,-0.5) node [above] {\j};}
   \draw (0.3,-0.3) node {$s\backslash t$};
   \drbl{11}{12};\drwl{12}{13};
   \drwl{21}{12};\drbl{22}{13};
   \drbl{21}{32};\drwl{22}{33};
   \drwl{41}{32};\drbl{42}{33};
   \drbl{41}{52};\drwl{42}{53};
   \drwl{51}{52};\drbl{52}{53};
   \drbl{51}{62};\drwl{52}{63};
   \drwl{11}{62};\drbl{12}{63};
   \draw (V11) node [left] {$i_1$};
   \draw (V21) node [left] {$i_2$};
   \draw (V41) node [left] {$i_3$};
   \draw (V51) node [left] {$i_4$};
   \draw (V12) node [above] {$j_1$};
   \draw (V32) node [left] {$j_2$};
   \draw (V52) node [above] {$j_3$};
   \draw (V62) node [left] {$j_4$};
  \end{tikzpicture}
 \end{center}
\caption{A typical permutation}
\label{fig:permutation}
\end{figure}
Now we state the following theorem on a Markov basis for the case of $T=3$.

\begin{theorem}
\label{thm:T3}
 A Markov basis for THMC model with $T=3$ is given by the set of
 crossing path swappings in (\ref{eq:crossing}) and 
 the set of $m \times m$ permutations  
 ${\cal Z}(i_1,\dots,i_m;\allowbreak j_1,\dots,j_m)$ in~(\ref{eq:moves-T3}), 
 where $m=2,\dots, S$, $i_1,\dots, i_m$ are
 distinct, and $j_1, \dots, j_m$ are distinct.
\end{theorem}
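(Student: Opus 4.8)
The plan is to run a distance-reduction argument for the same distance $|\Bz|=\sum_{t,i,j}|z^t_{ij}|$ as in the proof of Theorem \ref{thm:s2}, but now descending on $|\Bz|$ directly rather than inducting on $T$. The first thing I would record is how drastically the case $T=3$ simplifies the move structure. Since a move satisfies $z^1_{ij}+z^2_{ij}=0$ for all $i,j$, we have $z^2_{ij}=-z^1_{ij}$, so the whole move is encoded by the single matrix $Z=(z^1_{ij})_{i,j\in\cS}$. Moreover $z^1_i=\sum_j z^1_{ij}=0$ gives zero row sums, and Lemma \ref{lemma:terminal} (equivalently $z^3_j=\sum_i z^2_{ij}=0$) gives zero column sums. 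Thus $Z$ is an integer matrix with vanishing margins and $|\Bz|=2\sum_{i,j}|z^1_{ij}|$; the task reduces to driving $Z$ to $0$ by the permutation moves ${\cal Z}(\cdot;\cdot)$, after which Proposition \ref{prop:swapping} (crossing path swappings) connects the two resulting edge-wise equivalent tables.

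The conceptual step I would isolate is that the permutation moves in (\ref{eq:moves-T3}) are exactly the \emph{simple alternating cycles} of $Z$. Reading off ${\cal Z}(i_1,\dots,i_m;j_1,\dots,j_m)$ at time $1$, its positive cells are $(i_l,j_l)$ and its negative cells are $(i_{l+1},j_l)$ (indices mod $m$), which is precisely the cell sequence of the bipartite cycle $i_1\,j_1\,i_2\,j_2\cdots i_m\,j_m$ traversing positive and negative edges alternately; the cyclic choice of $\sigma$ in the statement is what produces a single cycle rather than a product of shorter ones. So given $Z\neq 0$ I would produce such a cycle inside the support of $Z$. Because every nonzero row and every nonzero column of $Z$ has zero sum, it contains both a positive and a negative entry; starting from any positive cell and alternately stepping to a negative cell in the same column, then to a positive cell in the same row, and so on, builds an alternating walk that must eventually revisit a row or a column. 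Cutting the walk at the first repeated vertex yields a simple alternating cycle with distinct rows $i_1,\dots,i_m$ and distinct columns $j_1,\dots,j_m$, i.e.\ an instance of ${\cal Z}(i_1,\dots,i_m;j_1,\dots,j_m)$.

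It then remains to apply this move in the direction that reduces $|\Bz|$ and to check that it is realizable on actual paths. Here the sign bookkeeping is clean: a positive cell $(i_l,j_l)$ means $x^1_{i_l j_l}>0$, so $\Bx$ contains a path through $1\ncolon i_l j_l$; a negative cell $(i_{l+1},j_l)$ means $z^2_{i_{l+1}j_l}=-z^1_{i_{l+1}j_l}>0$, hence $x^2_{i_{l+1} j_l}>0$ and $\Bx$ contains a path through $2\ncolon i_{l+1} j_l$. Thus all the partial paths required to perform the $m$ by $m$ permutation are present in $\Bx$ (invoking Lemma \ref{rem:partial-dominance} to arrange them), and subtracting the move decreases each of the $2m$ cycle cells of $\sum_{i,j}|z^1_{ij}|$ by one, so $|\Bz|$ strictly decreases. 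Iterating drives $Z$ to $0$ in finitely many steps, at which point Proposition \ref{prop:swapping} finishes the connection.

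The step I expect to be most delicate is precisely this realizability check when several of the chosen paths coincide (the degeneracy already flagged after (\ref{eq:2by2-swap})), so that the permutation collapses to lower degree and one must be sure the swap still leaves a nonnegative table. I would handle it by observing that only positivity of the individual edge counts $x^1_{i_l j_l}$ and $x^2_{i_{l+1}j_l}$ is needed to carry out the swap, and that these positivities are forced by the signs of $Z$ irrespective of any coincidences among the paths; the cancelling contributions of the ``head'' and ``tail'' transitions (as in the $2$ by $2$ computation) guarantee that $Z$ is altered only on the $2m$ cells of the chosen cycle, so no other coordinate of $|\Bz|$ can increase.
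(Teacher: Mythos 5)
Your argument is correct and is essentially the paper's own proof: your zero-margin matrix $Z=(z^1_{ij})$ with $z^2_{ij}=-z^1_{ij}$ is exactly the paper's observation (P1) about the edge-sign pattern graph for $T=3$, your alternating walk cut at the first repeated vertex is the paper's ``simple loop,'' and the realizability-plus-subtraction step (via Lemma \ref{rem:partial-dominance}, reducing $|\Bz|$ by $4m$, then finishing with crossing path swappings) is the distance-reduction argument of Proposition \ref{prop:T3-strict}. The only difference is presentational: the paper packages the realizable representatives as the explicit moves $\bar Z(i_1,\dots,i_m;j_1,\dots,j_m)$, which you handle implicitly.
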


% As discussed in Section \ref{subsec:preparations}, an $m$ by $m$
% permutation corresponds to a collection of moves.  
% Theorem \ref{thm:T3} states that a Markov basis is formed by the union
% of these collections of moves for the permutations.  
For proving Theorem \ref{thm:T3} we need more
detailed consideration of these moves.

Let
\[
\cI=\{i_1, \dots, i_m\}, \quad \cJ=\{j_1, \dots, j_m\}.
\]
For each $j\in \cI\cap \cJ$, there exist unique indices 
$i(j), i'(j)\in \cI$, $k(j), k'(j)\in \cJ$, such that 
\begin{align*}
&1\ncolon i(j)j  \, \in \, \{1\ncolon i_1 j_1, 1\ncolon i_2 j_2, \dots, 1\ncolon i_m j_m\}, \\
&2\ncolon jk'(j) \, \in \, \{2\ncolon i_1 j_1, 2\ncolon i_2 j_2, \dots, 2\ncolon i_m j_m\}, \\
&1\ncolon i'(j)j \, \in \, \{1\ncolon i_1 j_m, 1\ncolon i_2 j_1,\dots, 1 \ncolon i_m j_{m-1}\},\\
&2\ncolon jk(j)  \, \in \, \{2\ncolon i_1 j_m, 2\ncolon i_2 j_1, \dots, 2 \ncolon i_m j_{m-1}\}.
\end{align*}
For example consider $j=1$ in Figure \ref{fig:permutation}.
Then $i(1)1k(1)=116$ is a solid path and $i'(1)1 k'(1)=211$ is
a dotted path.  
Next, for each $j\in \cI\setminus \cJ$, there exist unique indices
$k(j),k'(j)$ such that
\[
2\ncolon jk(j)\in  \{2\ncolon i_1 j_m, 2\ncolon i_2 j_1, \dots, 2 \ncolon i_m j_{m-1}\},\quad
2\ncolon jk'(j)\in \{2\ncolon i_1 j_1, 2\ncolon i_2 j_2, \dots, 2\ncolon i_m j_m\}.
\]
For $j=2$ in Figure \ref{fig:permutation}, $k(j)=1, k'(j)=3$, 
$2\ncolon 21$ is a solid edge and $2\ncolon 23$ is a dotted edge.
Finally for each $j\in \cJ\setminus \cI$
there exist unique indices
$i(j),i'(j)$ such that
\[
1\ncolon i(j)j \in  \{1\ncolon i_1 j_1, 1\ncolon i_2 j_2, \dots, 1\ncolon i_m j_m\}, \quad
1\ncolon i'(j)j \in \{1\ncolon i_1 j_m, 1\ncolon i_2 j_1, \dots, 1\ncolon i_m j_{m-1}\}.
\]
For $j=3$ in Figure \ref{fig:permutation}, $i(j)=2, i'(j)=4$, 
$1\ncolon 23$ is a solid edge and $1\ncolon 43$ is a dotted edge.
Define the set of paths $W_1$ and $W_2$ as follows,
\begin{align*}
W_1 &= \{i(j)jk(j), j\in \cI \cap \cJ\} \cup
 \{s_j j k(j), j\in \cI\setminus \cJ\} \cup
 \{i(j)j \tilde s_j, j\in \cJ\setminus \cI \} \nonumber\\
W_2 &=
\{i'(j)jk'(j), j\in \cI\cap \cI\}\cup
\{s_j j k'(j), j\in \cI\setminus \cJ\} \cup
\{i'(j)j \tilde s_j, j\in \cJ\setminus \cI \},
\end{align*}
where $s_j,j\in \cI\setminus \cJ$ 
and $\tilde s_j, j\in \cJ\setminus \cI$ are arbitrary states.
Then we consider a set of moves 
$\bar Z(i_1,\dots,i_m;j_1,\dots,j_m) 
= \{\bar z(\omega \mid i_1,\dots,i_m;j_1,\dots,j_m) \mid \omega \in
\cS^T\}$,  
\begin{align}
 \label{eq:m-by-m-minimal}
 \bar z(\omega \mid i_1,\dots,i_m;j_1,\dots,j_m) = \left\{
 \begin{array}{rl}
  1 & \text{if } \omega \in W_1\\
  -1 & \text{if } \omega \in W_2\\
  0 & \text{otherwise}\\
 \end{array}
 \right.
\end{align}
$\bar Z(i_1,\dots,i_m;j_1,\dots,j_m)$ is easily shown to be an $m$ by
$m$ permutation.  
Now we have the following proposition, which implies Theorem \ref{thm:T3}.

\begin{proposition}
\label{prop:T3-strict}
A Markov basis for THMC model for $T=3$ is given by the set of
crossing path swappings in (\ref{eq:crossing}) and moves of 
$\bar Z(i_1,\dots,i_m;j_1,\dots,j_m)$ of the form  (\ref{eq:m-by-m-minimal}),
where $m=2,\dots, S$, $i_1,\dots, i_m$ are distinct, and
$j_1, \dots, j_m$ are distinct.
\end{proposition}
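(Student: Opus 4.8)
The plan is to argue by distance reduction on $|\Bz|=\sum_{t,i,j}|z^t_{ij}|$, as in the proof of Theorem~\ref{thm:s2}, after first exploiting the rigidity of the $T=3$ case. Since $T=3$, a move $\Bz=\Bx-\By$ satisfies $z^1_{ij}+z^2_{ij}=0$ for all $i,j$ (total transitions are fixed), so $z^2=-z^1$ and $|\Bz|=2\sum_{i,j}|z^1_{ij}|$. Moreover $\sum_j z^1_{ij}=z^1_i=0$ because initial frequencies are fixed, and $\sum_i z^1_{ij}=-z^3_j=0$ by Lemma~\ref{lemma:terminal}. Hence the whole move is encoded by a single integer matrix $Z=(z^1_{ij})$ whose row sums and column sums all vanish. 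The base case of the induction is $Z=0$: then $\Bx$ and $\By$ are edge-wise equivalent and Proposition~\ref{prop:swapping} connects them by crossing path swappings. So it suffices to show that whenever $Z\neq 0$ we can, using the moves of the proposition, strictly decrease $\sum_{i,j}|z^1_{ij}|$ while keeping the table nonnegative.

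The combinatorial heart is to match an $m$ by $m$ permutation to an alternating cycle of $Z$. Reading off~(\ref{eq:moves-T3}), the cyclic permutation ${\cal Z}(i_1,\dots,i_m;j_1,\dots,j_m)$ contributes, at the edge level, the $\pm1$ matrix $M=\sum_{l=1}^m(E_{i_l j_l}-E_{i_l j_{\sigma(l)}})$, i.e.\ $+1$ along $1\ncolon i_l j_l$ and $-1$ along $1\ncolon i_l j_{l-1}$, where $E_{ij}$ is the matrix unit. This is precisely the incidence matrix of the simple alternating cycle $i_1\to j_1\to i_2\to j_2\to\cdots\to i_m\to j_m\to i_1$, whose forward steps sit on positive entries and backward steps on negative ones. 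I would then invoke the standard fact that a nonzero integer matrix with vanishing row and column sums contains such a cycle: form the bipartite sign graph on rows and columns, observe that a vanishing row (resp.\ column) sum forces every occupied row (resp.\ column) vertex to meet both a positive and a negative edge, and extend an alternating walk until it first revisits a vertex, extracting a simple alternating cycle. This yields distinct $i_1,\dots,i_m$ and distinct $j_1,\dots,j_m$ (with $\cI\cap\cJ$ possibly nonempty), and subtracting the associated $M$ drives $2m$ entries of $Z$ toward zero, so $\sum_{i,j}|z^1_{ij}|$ drops by exactly $2m$.

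It remains to realize this reduction by an actual move of the proposition while preserving nonnegativity, and this is the step I expect to be the main obstacle. To subtract $\bar Z(i_1,\dots,i_m;j_1,\dots,j_m)$ from $\Bx$ I must arrange that every path in the set $W_1$ of~(\ref{eq:m-by-m-minimal}) is present in $\Bx$. Each such path passes through a distinct middle node $(j,2)$, $j\in\cI\cup\cJ$, and I would produce it by a single crossing path swapping there, as in Lemma~\ref{rem:partial-dominance}: for $j\in\cI\cap\cJ$ the solid incoming edge $1\ncolon i(j)j$ and solid outgoing edge $2\ncolon jk(j)$ both carry positive $z$, hence positive count in $\Bx$, so some path realizes $i(j)\,j\,k(j)$; for $j\in\cI\setminus\cJ$ (resp.\ $j\in\cJ\setminus\cI$) only the outgoing edge $2\ncolon jk(j)$ (resp.\ incoming edge $1\ncolon i(j)j$) must be matched, which again has positive count, and the free state $s_j$ (resp.\ $\tilde s_j$) is simply read off from the realized path, exactly as allowed by the construction. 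The point that makes these realizations simultaneous is that a crossing swap at $(j,2)$ only repairs pairs of paths whose middle state is $j$; swaps at distinct middle nodes therefore act on disjoint sets of paths and cannot undo one another, and none of them alters any $z^1_{ij}$. Subtracting $\bar Z$ then lowers each $W_1$-count by one (still $\ge 0$) and raises each $W_2$-count by one, leaving $\Bx-\bar Z\ge 0$ with edge matrix $Z-M$, so $\sum_{i,j}|z^1_{ij}|$ strictly decreases.

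Iterating reduces $Z$ to $0$, and the base case completes the connectivity argument; since each $\bar Z$ and each crossing swap manifestly lies in $\ker A$, this proves Proposition~\ref{prop:T3-strict}. Finally, because every $\bar Z(i_1,\dots,i_m;j_1,\dots,j_m)$ is a particular instance of the $m$ by $m$ permutation ${\cal Z}(i_1,\dots,i_m;j_1,\dots,j_m)$ of~(\ref{eq:moves-T3}), the Markov basis of the proposition is contained in the set of Theorem~\ref{thm:T3}, so the proposition implies the theorem.
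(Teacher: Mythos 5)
Your proposal is correct and follows essentially the same route as the paper: both reduce to the single matrix $Z=(z^1_{ij})$ with vanishing row and column sums (the paper's property (P1) plus $z^2=-z^1$), extract a simple alternating loop from the edge-sign pattern, realize the required paths of $W_1$ in $\Bx$ by crossing path swappings (Lemma~\ref{rem:partial-dominance}), and subtract the corresponding $\bar Z$ move to decrease the distance, finishing the edge-wise equivalent case with Proposition~\ref{prop:swapping}. Your extra care about why the swaps at distinct middle nodes can be performed simultaneously is a detail the paper leaves implicit but does not change the argument.
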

\begin{proof}

 For $T=3$, (\ref{eq:zsum2T1}) implies that node frequencies are common
for two elements $\Bx$ and $\By$ of the same fiber.  Therefore the
 edge-sign pattern 
graph $G$ has the following property (P1):
\begin{quote}
If $z^1_{ij}>0$\ \ ($<0$), there exists $i'$ such that $z^1_{i'j}<0$ \ \  ($>0$).\\
If $z^2_{ij}>0$\ \ ($<0$), there exists $j'$ such that $z^2_{ij'}<0$\ \  ($>0$).
\end{quote}
Clearly similar property holds for initial nodes at $t=1$ and terminal nodes
at $t=3$.  Also for $T=3$ we have $z^1_{ij}+z^2_{ij}=0$ and therefore
$z^1_{ij}>0$ \ ($<0$) if and only if $z^2_{ij}<0$ \ ($>0$).
This means that $G$ for $t=2,3$ is a copy of $G$ for $t=1,2$ with
sings of edges reversed.

Suppose that $\Bx$ and $\By$ are not edge-wise equivalent.  Then by the
above properties $G$ has some positive edge $1\ncolon i_1 j_1$.  We follow the
edge from $t=1$ to $t=2$.  Then by (P1), we can 
follow a negative edge $1\ncolon i_2 j_1$ back to $t=1$.  
Then we can again follow a positive edge $1\ncolon i_2 j_2$ to $t=2$, etc.
By continuing this, the same node is eventually visited twice and a loop is formed.
By considering the shortest loop, it can be easily shown 
that there exists the following simple loop of $G$ for $t=1,2$:

\begin{quote}
There are distinct indices $i_1,\dots,i_m$ and
distinct indices $j_1, \dots, j_m$ such that
$1\ncolon i_1 j_1, 1\ncolon i_2 j_2, \dots, 1\ncolon i_m j_m$ are positive edges of $G$ and
$1\ncolon i_1 j_m, 1\ncolon i_2 j_1, \dots 1\ncolon i_m j_{m-1}$ 
are negative edges of $G$.
\end{quote}

Now we employ the distance reduction argument of
\citet{takemura-aoki-2005bernoulli}.
As above let $\cI=\{i_1,\dots,i_m\}$ and $\cJ=\{j_1, \dots, j_m\}$.
By Lemma \ref{rem:partial-dominance} we can assume that
$x(i(j)jk(j))>0$  for $j\in \cI\cap \cJ$.
For $j\in \cI\setminus \cJ$ we can find a path  $\omega=s_j j k(j)$
with $x(\omega)>0$.  Similarly for $j\in \cJ\setminus \cI$ 
we can find a path $\omega=i(j)j \tilde s_j$ such that
$x(\omega)>0$.  
Now we can subtract %
a move in (\ref{eq:m-by-m-minimal}) from $\Bx$. 
 This reduces %
 $\vert z \vert$ by $4m$.
 This completes the proof of Proposition \ref{prop:T3-strict}.
\end{proof}

\section{A numerical example}
\label{sec:numerical}
In this section we give a numerical example for testing
THMC model against the non-homogeneous Markov chain model using
our Markov basis.  

\begin{table}[htbp]
\centering
 \label{tab1}
 \caption{Marijuana use data}
 \begin{tabular}{c|ccc|ccc|ccc}\hline
  & \multicolumn{9}{c}{$1978$}\\ 
  & \multicolumn{3}{c}{1} & \multicolumn{3}{c}{2}
  & \multicolumn{3}{c}{3}\\ \cline{2-10}
  $1977$ & \multicolumn{3}{c|}{$1979$} & \multicolumn{3}{c|}{$1979$} 
  & \multicolumn{3}{c}{$1979$}\\ 
    & 1  & 2 & 3 & 1 & 2  & 3 & 1 & 2 & 3\\ \hline
  1 & 76 & 6 & 6 & 4 & 12 & 1 & 0 & 0 & 1 \\
  2 & 3  & 0 & 0 & 1 & 1  & 2 & 1 & 0 & 2\\
  3 & 0  & 0 & 0 & 0 & 0  & 0 & 0 & 2 & 2\\ \hline
 \end{tabular}
\end{table}

Table 1 refers to a longitudinal data from 1977 to 1979 on 
marijuana use of 120 female respondents who were age 14 in 1977 (e.g
\cite{vermunt-etal}).  
The degrees of dependence are categorized into the following three
levels, 1.\ never use; 2.\ no more than once a month; 3.\ more than once a
month.  

We examine the goodness-of-fit of $H_0$ with 
$(S,T)=(3,3)$ by testing $\bar H_0$.
We use Pearson's $\chi^2$ as a test statistic.
We have $\chi^2=11.533$.

We computed the exact distribution of $\chi^2$ via MCMC with a Markov
basis derived in Theorem \ref{thm:T3}. 
We sampled 100,000 tables after 50,000 burn-in steps.
Figure \ref{fig:2} represents the histogram of the sampling distribution
of $\chi^2$ and the solid line in the figure is the density function of
the asymptotic $\chi^2$ distribution with degrees of freedom 4. 
The asymptotic $p$-value and the exact $p$-value are 
$0.0212$ and $0.0184$ and THMC model is rejected.
For this example the homogeneity of Markov chain model is rejected,
since THMC model is rejected.  
However, note that the approximation by $\chi^2$ distribution is not
very good for this data.  
This is probably due to small frequencies in the data set.

\begin{figure}[htbp]
 \centering
 \label{fig:2}
 \includegraphics[scale=0.4]{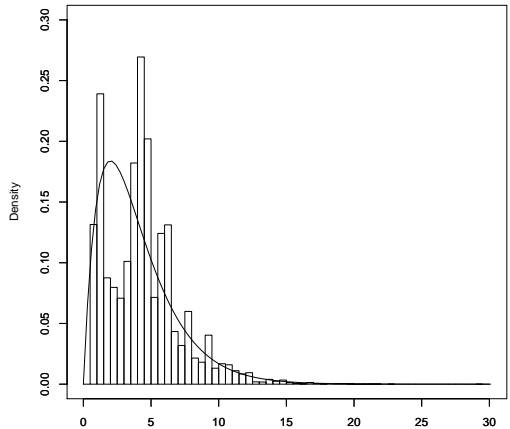}
 \caption{Sampling distribution of $L$ via MCMC}
\end{figure}

\section{Discussion and a conjecture for general case}
\label{sec:discussion}

From the viewpoint of application, it is clearly desirable to obtain
Markov bases for general $S$ and $T$.  By 4ti2 we can obtain a Markov
basis for $S=3$ and $T=5$.  However currently this case seems to be the
largest case which can be handled by a software.

Even if obtaining a Markov basis is difficult, it is of theoretical
interest to know some properties of Markov bases for general $S$ and
$T$.  Note that the description of Markov basis for $S=2$ is common for
all $T\ge 4$.  This is very similar to the notion of Markov complexity
(\cite{hosten-sullivant-2007}, \cite{santos-sturmfels-2003}, \cite{aoki-takemura-2003anz})
and we believe that there exists some bound of complexity of Markov
basis in $T$ for a given $S$.

We have done extensive investigation of 
edge-sign pattern graphs $G$ for the case  $S=3$ and $T=4$.
We will discuss these edge-sign pattern graphs in \cite{takemura-kudo-hara}.
It seems that the following notion 
of {\em extended simple loop} of the edge-sign pattern  graph $G$
is important.
Here we give only a rough definition of an extended simple loop. 
An extended simple loop is a loop, such that when we are moving
towards the future we follow positive edges and when we are moving
towards the past we follow negative edges of $G$.  Also we require that
each node is passed at most once. 
An important example of an extended simple loop
for $S=3$ and $T=4$ is depicted as follows.
\begin{center}
\begin{tikzpicture}
\mytkzini
\drb{11}{32}; \drb{21}{12}; \drb{12}{23}; \drb{23}{34};
\drw{11}{22}; \drw{21}{32}; \drw{22}{13}; \drw{13}{34};
\end{tikzpicture}
\end{center}
We have used the term ``extended'' to indicate that the lengths of
partial paths may be greater than one.  If all the edges are of length
one (in this case the loop involves only two time points $t$ and
$t+1$), we call it a {\em simple loop}.

Note that for the case of $S=2$, a Markov basis consists of moves,
which are sum of at most two extended simple loops.  For the case of
$S=3$ and $T=4$ we checked that a Markov basis consists of moves,
which are sums of at most three extended simple loops.  Therefore our
conjecture at this point is that for each  $S$, there exists $k_S$,
such that a Markov basis
consists of moves which are sums of at most $k_S$ extended simple
loops.  A stronger conjecture is $k_S=S$.

\bibliographystyle{plainnat}
\bibliography{homogeneous1}

\end{document}